\selectfont\symbol{62}\fontencoding{\encodingdefault}}
\newcommand{\assign}{:=}
\newcommand{\backassign}{=:}
\newcommand{\dueto}[1]{\textup{\textbf{(#1) }}}
\newcommand{\infixand}{\text{ and }}
\newcommand{\longdownarrow}{{\mbox{\rotatebox[origin=c]{-90}{$\longrightarrow$}}}}
\newcommand{\longuparrow}{{\mbox{\rotatebox[origin=c]{90}{$\longrightarrow$}}}}
\newcommand{\mathD}{\mathrm{D}}
\newcommand{\mathd}{\mathrm{d}}
\newcommand{\nin}{\not\in}
\newcommand{\nobracket}{}
\newcommand{\tmmathbf}[1]{\ensuremath{\boldsymbol{#1}}}
\newcommand{\tmname}[1]{\textsc{#1}}
\newcommand{\tmnote}[1]{\thanks{\textit{Note:} #1}}
\newcommand{\tmop}[1]{\ensuremath{\operatorname{#1}}}
\newcommand{\tmtextit}[1]{\text{{\itshape{#1}}}}
\newenvironment{itemizedot}{\begin{itemize} }{\end{itemize}}
\newenvironment{proof}{\noindent\textbf{Proof\ }}{\hspace*{\fill}$\Box$\medskip}
\newenvironment{proof*}[1]{\noindent\textbf{#1\ }}{\hspace*{\fill}$\Box$\medskip}
\newtheorem{definition}{Definition}
\newtheorem{lemma}{Lemma}
{\theorembodyfont{\rmfamily}\newtheorem{remark}{Remark}}
\newtheorem{theorem}{Theorem}
\begin{document}

\title{The dyadic and the continuous Hilbert transforms with values in Banach
spaces. Part 2.}

\author{
  Komla Domelevo
  \and
  Stefanie Petermichl
  \tmnote{The second author is supported by the European Research Council
  project CHRiSHarMa no. DLV-682402 and by the Alexander von Humboldt Foundation}
}

\maketitle

\begin{abstract}
  We show that if the dyadic Hilbert transform with values in a Banach space
  is $L^p$ bounded, then so is the Hilbert transform, with a linear relation
  of the bounds. This result is the counterpart of {\cite{DomPet2022a}} where
  the opposite bound was proven. 
\end{abstract}

\section{Introduction}

Let $X$ be a Banach space. Let $\mathbb{D}$ the unit disc and $\partial
\mathbb{D}$ the unit circle. Let $f \in L^p (\partial \mathbb{D}, X)$ be a
$L^p$ integrable $X$--valued function, that is
\[ \| f \|_{L^p (\partial \mathbb{D}, X)} \assign \left( \int_{\partial
   \mathbb{D}} \| f \|_X^p \mathd x \right)^{1 / p} < \infty . \]
We also write in short $L^p (\partial \mathbb{D})$ instead of $L^p (\partial
\mathbb{D}, X)$. It is well known that the Hilbert transfrom $\mathcal{H} :
L^p (\partial \mathbb{D}) \rightarrow L^p (\partial \mathbb{D})$ is bounded if
and only if $X$ is UMD. We wish to compare this operator with the so--called
dyadic Hilbert transform. Let $\Omega \assign [0, 1)$, $\mathcal{D}$ the set
of dyadic intervals, and
\[ f (x) = \langle f \rangle_{I_0} + \sum_{I \in \mathcal{D}} (f, h_I) h_I \]
the Haar decomposition of the $X$--valued function $f$. The dyadic Hilbert
transform $\mathcal{S}$ is the operator sending
\[ \langle f \rangle_{I_0} \mapsto 0, \quad h_{I_0} \mapsto 0, \quad
   h_{I_{\pm}} \mapsto \pm h_{\mp} . \]
It is well known that $\mathcal{S} : L^p (\Omega) \rightarrow L^p (\Omega)$ is
bounded in the UMD Banach space $X$, and moreover we know from the recent
preprint {\cite{DomPet2022a}} that its operator norm is bounded by the
operator norm of $\mathcal{H} : L^p (\partial \mathbb{D}) \rightarrow L^p
(\partial \mathbb{D})$ with a linear dependence. In this part of our study, we
prove the converse.

\begin{theorem}
  \label{T: H less than S}Let $\mathcal{H} : L^p (\partial \mathbb{D})
  \rightarrow L^p (\partial \mathbb{D})$ the Hilbert transform on the disc
  with values in $X$, and $\mathcal{S} : L^p ([0, 1)) \rightarrow L^p ([0,
  1))$ the dyadic Hilbert transfrom with values in $X$. We have
  \[ \| \mathcal{H} \|_{p \rightarrow p} \leqslant \| \mathcal{S} \|_{p
     \rightarrow p} . \]
\end{theorem}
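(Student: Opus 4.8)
The plan is to realize the continuous Hilbert transform $\mathcal{H}$ as an honest average, over the rotations of the circle, of isometric copies of the dyadic shift $\mathcal{S}$, and then to pass the operator norm through the average. Identifying $\partial\mathbb{D}$ with $[0,1)$ by arclength, so that both operators act on $L^p ([0,1), X)$, I would let $\tau_t$ denote rotation by $t$, that is $(\tau_t g)(x) = g(x-t)$ taken modulo $1$. Since $\tau_t$ is an isometric isomorphism of $L^p ([0,1), X)$ for every $t$ and every $p$, the conjugated operators $\mathcal{S}^{(t)} := \tau_t \, \mathcal{S} \, \tau_{-t}$ all satisfy $\| \mathcal{S}^{(t)} \|_{p \rightarrow p} = \| \mathcal{S} \|_{p \rightarrow p}$. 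The entire theorem then reduces to the single identity
\[ \mathcal{H} = \int_0^1 \mathcal{S}^{(t)} \, \mathd t, \]
interpreted on a dense class of $X$--valued functions. Granting this, the vector--valued Minkowski integral inequality gives, for such $f$,
\[ \| \mathcal{H} f \|_{L^p} = \Big\| \int_0^1 \mathcal{S}^{(t)} f \, \mathd t \Big\|_{L^p} \leqslant \int_0^1 \| \mathcal{S}^{(t)} f \|_{L^p} \, \mathd t \leqslant \int_0^1 \| \mathcal{S}^{(t)} \|_{p \rightarrow p} \, \mathd t \; \| f \|_{L^p} = \| \mathcal{S} \|_{p \rightarrow p} \, \| f \|_{L^p}, \]
and the claimed bound follows by density. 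Note that it is precisely the fact that the averaging measure on the rotations is a \textit{probability} measure that produces the constant $1$.

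To establish the identity I would work first in the scalar $L^2$ setting with the Fourier basis $e_n (x) = e^{2 \pi i n x}$ and compute the bilinear form of the averaged operator $A := \int_0^1 \mathcal{S}^{(t)} \, \mathd t$. Using $\tau_{-t} e_n = e^{2 \pi i n t} e_n$ and the unitarity of $\tau_t$, one finds $(A e_n, e_m) = (\mathcal{S} e_n, e_m) \int_0^1 e^{2 \pi i (n-m) t} \, \mathd t = (\mathcal{S} e_n, e_m) \, \delta_{n m}$, so that rotation--averaging annihilates every off--diagonal entry and $A$ is exactly the Fourier multiplier with symbol $m (n) = (\mathcal{S} e_n, e_n)$. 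It then remains to evaluate this symbol. Expanding $e_n$ in the Haar basis and using $\mathcal{S} h_{J_+} = h_{J_-}$, $\mathcal{S} h_{J_-} = - h_{J_+}$ on each pair of sibling Haar functions, the diagonal entry collapses to
\[ m (n) = \sum_{J \in \mathcal{D}} \Big[ (e_n, h_{J_+}) \, \overline{(e_n, h_{J_-})} - (e_n, h_{J_-}) \, \overline{(e_n, h_{J_+})} \Big], \]
a purely imaginary quantity built from the explicitly computable Fourier coefficients of the Haar functions. The core computation is to sum this series over all dyadic $J$ and show it equals exactly $- i \operatorname{sgn} (n)$, the symbol of $\mathcal{H}$ in the convention at hand; the case $n = 0$ is immediate since $\mathcal{S}$ annihilates constants. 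This is the step where the sign pattern in the definition of $\mathcal{S}$ (the $\pm$ in $h_{I_\pm} \mapsto \pm h_{I_\mp}$) is used, and it is what pins the constant to $1$ rather than some $c > 1$.

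I expect this symbol evaluation to be the main obstacle: one must recognize the sum over scales as a telescoping or geometric structure whose total is exactly $\operatorname{sgn}(n)$ in modulus, with no spurious constant surviving. The remaining points are routine but require care: justifying the interchange of the rotation--average with the infinite Haar expansion defining $\mathcal{S}$ (Fubini together with $L^2$--convergence on a dense class such as trigonometric polynomials); transferring the scalar identity to $X$--valued functions, which follows tensor--wise since $\mathcal{S}^{(t)} (e_n \otimes \xi) = (\mathcal{S}^{(t)} e_n) \otimes \xi$ and extends to $X$--valued trigonometric polynomials by linearity; and the minor bookkeeping at the coarsest scale, where the intervals of the rotated grid wrap around $0 \equiv 1$, which is harmless because $h_{I_0}$ and the averages lie in the kernel of $\mathcal{S}$. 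Once the identity $A = \mathcal{H}$ is secured, the norm inequality is immediate from the averaging argument above.
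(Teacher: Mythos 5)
Your proposal founders on its central identity: the rotation average of the dyadic Hilbert transform is \emph{not} the Hilbert transform, and the paper explicitly anticipates this failure. The reduction to a Fourier multiplier is correct — averaging $\mathcal{S}^{(t)} = \tau_t \mathcal{S} \tau_{-t}$ over $t \in [0,1)$ kills all off-diagonal entries and leaves the symbol $m(n) = (\mathcal{S} e_n, e_n)$ — but when you actually evaluate this symbol the answer is not $-i \operatorname{sgn}(n)$. For sibling children of a parent $J$ one has $(e_n, h_{J_+}) = e^{i \pi n |J|} (e_n, h_{J_-})$, so each parent contributes $2 i \sin (\pi n |J|) \, | (e_n, h_{J_-}) |^2$, and summing the $2^k$ parents of length $2^{-k}$ gives
\[ m (n) = \frac{16 i}{\pi^2 n^2} \sum_{k \geqslant 1} 4^k \sin (\pi n 2^{- k}) \sin^4 (\pi n 2^{- k - 2}) . \]
This is constant along dyadic orbits ($m(2n) = m(n)$) but not across odd frequencies: the dominant $k = 1$ term carries the factor $\sin (\pi n / 2)$, which equals $+1$ for $n \equiv 1 \pmod 4$ and $-1$ for $n \equiv 3 \pmod 4$; numerically $m(1) \approx 0.170\, i$ while $m(3) \approx - 0.241\, i$. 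So the averaged operator is a genuinely different multiplier, not even a nonzero scalar multiple of $\mathcal{H}$, and no renormalization rescues the scheme (even a clean identity $m(n) = -c\, i \operatorname{sgn}(n)$ with $0 \leqslant c \leqslant 1$ would only yield $\| \mathcal{H} \|_{p \rightarrow p} \leqslant c^{-1} \| \mathcal{S} \|_{p \rightarrow p}$, and here $c$ does not exist). The structural reason is the one isolated in Section \ref{S: averaging dyadic Hilbert transform}: unlike Petermichl's classical shift $\mathcal{S}_{\tmop{cl}} : h_I \mapsto (h_{I_+} - h_{I_-}) / \sqrt{2}$, whose odd kernel averages to a multiple of $1/(t-x)$, the operator $\mathcal{S}$ exchanges Haar functions of sibling intervals, so its kernel splits into a strictly under-diagonal and a strictly over-diagonal piece whose even combination averages to a symmetric, translation- and dilation-invariant kernel, hence to zero; the paper's lemma there shows $\mathbb{E}_r \mathbb{E}_{\alpha} K^{\alpha, r} = 0$, and the introduction states outright that the averaging strategy of {\cite{Pet2000b}} cannot prove Theorem \ref{T: H less than S}.

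The paper's actual route is stochastic rather than averaging-based. It builds a two-dimensional discrete random walk $B$ from the Rademacher functions of the dyadic system, crafted so that $\mathcal{S}$ acts on increments as a rotation, $\mathcal{S} \mathd B_l = \mathd B_l^{\top}$ (equation \eqref{eq:SdB}); then the discrete martingale transforms attached to $f$ and $g = \mathcal{H} f$ satisfy the exact pointwise identity $\mathcal{S} M^f = M^g$ (Lemma \ref{L: Lp estimate for Mng}), whence $\| M_T^g \|_p \leqslant \| \mathcal{S} \|_{p \rightarrow p} \| M_T^f \|_p$ with constant exactly $1$; finally a weak-convergence theorem for the stopped, sampled walk (Theorem \ref{T: convergence}, via the consistency and moment estimates of Section \ref{S: weak consistency}) transfers these discrete bounds to the stopped Brownian martingales $f(W_t)$, $g(W_t)$ representing boundary values on $\partial \mathbb{D}$. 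In other words, $\mathcal{S}$ is compared to $\mathcal{H}$ not as an average acting on function values but through its action on random walks — precisely the point of the paper's Remark 1 — so your remaining technical concerns (interchange of average and Haar expansion, vector-valued transference, wrap-around at the coarsest scale) are moot: the obstacle is not technical but the falsity of the averaging identity itself.
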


The connection established by the second author in {\cite{Pet2000b}} between
the Hilbert transform $\mathcal{H}_{\mathbb{R}}$ on the real line and the
classical Haar shift $\mathcal{S}_{\tmop{cl}} : h_I \rightarrow (h_{I_+} -
h_{I_-}) / \sqrt{2}$ states that the Hilbert transform is, up to a universal
multiplicative constant, an average of translated and dilated classical Haar
shifts $\mathcal{S}_{\tmop{cl}}^{\alpha, r}$of the form
\[ \mathbb{E}^{\alpha} \mathbb{E}^r  \mathcal{S}_{\tmop{cl}}^{\alpha, r} =
   \mathcal{H}_{\mathbb{R}} . \]
Here $\alpha$ denotes the translation parameter, $r$ the dilation parameter,
$\mathbb{E}^{\alpha}$ the averaging operator with respect to translations, and
$\mathbb{E}^r$ the averaging operator with respect to dilations. Since $\|
\mathcal{S}_{\tmop{cl}}^{\alpha, r} \|_{p \rightarrow p} = \|
\mathcal{S}_{\tmop{cl}} \|_{p \rightarrow p}$, the upper--bound $\|
\mathcal{H}_{\mathbb{R}} \|_{p \rightarrow p} \leqslant \|
\mathcal{S}_{\tmop{cl}} \|_{p \rightarrow p}$ follows. A natural question is
to ask if a similar strategy allows one to prove Theorem \ref{T: H less than
S}. Unfortunately, we prove in Section \ref{S: averaging dyadic Hilbert
transform} that this is not the case since this average procedure yields for
the dyadic Hilbert transform $\mathcal{S}$
\[ \mathbb{E}^{\alpha} \mathbb{E}^r  \mathcal{S}^{\alpha, r} = 0 \]

However, the comparison of the two operators can be achieved through
stochastic representations of the Hilbert transform. It will reveal a profound
connection between $\mathcal{H}$ and $\mathcal{S}$, which was an important
motivation to us for bringing this dyadic Hilbert transform into play.

\

\paragraph{Stochastic representation of the Hilbert transform}Let $f \in L^p
(\partial \mathbb{D})$ given and (with the same notation) $f \in L^p
(\mathbb{D})$ its harmonic extension on the unit disc $\mathbb{D}$. Let
further $g = \mathcal{H} f$ the Hilbert transform of $f$, and $g$ its harmonic
extension. Introduce $W$ the standard two--dimensional Brownian motion started
at the origin. The It{\^o} formula ensures that almost surely,
\[ f (W_t) = f (0, 0) + \int_0^t \nabla f (W_s) \cdot \mathd W_s + \frac{1}{2}
   \int_0^t \Delta f (W_s) \mathd s = f (0, 0) + \int_0^t \nabla f (W_s) \cdot
   \mathd W_s, \]
since $f$ is harmonic. The formula above is valid for all times $t$ such that
$(W_s)_{0 \leqslant s \leqslant t}$ remains in the unit disc. Let $\tau$ the
stopping time
\[ \tau \assign \inf \{ t > 0 ; W_t \nin \mathbb{D} \} . \]
This is the random variable equal to the first instant when the continuous
random walk hits $\partial \mathbb{D}$. We have again thanks to It{\^o}'s
formula,
\[ f (W_{\tau}) = f (0, 0) + \int_0^{\tau} \nabla f (W_s) \cdot \mathd W_s .
\]
In particular if $W_{\tau} = z$, i.e. the random walk hits $\partial
\mathbb{D}$ at $z$, then
\[ f (z) = f (0, 0) + \int_0^{\tau} \nabla f (W_s) \cdot \mathd W_s . \]
Similarly, owing to the Cauchy--Riemann relations for the conjugate function
$g$ of $f$, we have
\[ \forall t \leqslant \tau, \quad g (W_t) = \int_0^t \nabla g (W_s) \cdot
   \mathd W_s = \int_0^t \nabla f (W_s)^{\perp} \cdot \mathd W_s, \]
where
\[ \nabla^{\perp} f (W_s) = \left(\begin{array}{cc}
     0 & - 1\\
     1 & 0
   \end{array}\right) \nabla f (W_s) \]
denotes the vector clockwise orthogonal to $\nabla f (W_s)$. In other words,
$g (W_t)$ is a martingale transform of $f (W_t)$ with predictable multiplier
the rotation matrix above. Finally if $W_{\tau} = z \in \partial \mathbb{D}$,
then
\[ g (z) = \int_0^{\tau} \nabla g (W_s) \cdot \mathd W_s = \int_0^{\tau}
   \nabla f (W_s)^{\perp} \cdot \mathd W_s, \]
In the next sections, we will be dealing with the martingale $\mathcal{M}^f_t
\assign f (W_t)$ and its martingale transform $\mathcal{M}_t^g \assign g
(W_t)$ defined as
\begin{equation}
  \forall t \leqslant \tau, \quad \mathcal{M}_t^f \assign f (0, 0) + \int_0^t
  \nabla f (W_s) \cdot \mathd W_s, \quad \mathcal{M}_t^g \assign \int_0^t
  \nabla^{\perp} f (W_s) \cdot \mathd W_s . \label{eq:Mft Mgt}
\end{equation}
In the case where $f$ and therefore $g$ are Hilbert--valued, the martingales
above are on the one hand differentially subordinate, i.e.
\[ \mathd [\mathcal{M}^f, \mathcal{M}^f]_t = \| \nabla f (W_s) \|^2 \mathd t =
   \| \nabla f (W_s)^{\perp} \|^2 \mathd t = \mathd [M^g, M^g]_t, \]
and on the other hand are orthogonal, i.e
\[ \mathd [\mathcal{M}^f, \mathcal{M}^g]_t = (\nabla f (W_s), \nabla^{\perp} f
   (W_s)) = 0. \]
Those were studied in the work of {\tmname{Ba{\~n}uelos}} and {\tmname{Wang}}
{\cite{BanWan1996}} to which we refer for more details. They prove the sharp
martingale $L^p$ bound $\| \mathcal{M}^g \|_p \leqslant c_p \| \mathcal{M}^f
\|_p$ using special functions, and where $c_p \assign \|
\mathcal{H}_{\mathbb{R}} \|_{p \rightarrow p}$.

In this study, we do not aim at providing directly an absolute constant for
the $L^p$ norm of the Hilbert transform, but rather look for a direct
connection between the Hilbert and dyadic Hilbert transforms that holds also
in the Banach valued case. We establish a tight connection through the use of
stochastic integrals.

\

\paragraph{Strategy and motivation}Due to the discrete nature of the dyadic
shift, we first aim at approximating the two stochastic integrals in
\eqref{eq:Mft Mgt} by using two--dimensional discrete random walks $(B_k)_{k
\geqslant 0}$ built on top of the dyadic system. We expect discrete
martingales $M_k^f$ and $M_k^g$ that should be discrete counterparts of
\eqref{eq:Mft Mgt}, namely
\begin{equation}
  M_k^f (x) \assign f (0, 0) + \sum_{l = 1}^k \nabla f (B_{l - 1} (x)) \cdot
  \mathd B_l (x), \quad M_k^g (x) \assign \sum_{l = 1}^k \nabla^{\perp} f
  (B_{l - 1} (x)) \cdot \mathd B_l (x), \label{eq:Mfn Mgn}
\end{equation}
where $x$ spans the dyadic probability space $\Omega \assign [0, 1)$ with
uniform probability density $\mathd \mathbb{P} (x) = \mathd x$. In order to
relate the dyadic operator $\mathcal{S}$ to the Hilbert transform  H, the
discrete random walk $(B_k)_{k \geqslant 0}$ has to be crafted in a very
unique manner, as we discuss now. We want to build this discrete random walk
so as to obtain the following diagram, where ``convergence'' is meant in a
weak sense defined later.
\[ \begin{array}{lllll}
     & f & \overset{\mathcal{H}}{\longrightarrow} & g & \\
     (\tmop{discretization} / \tmop{convergence}) & \longdownarrow
     \longuparrow &  & \longdownarrow \longuparrow & (\tmop{discretization} /
     \tmop{convergence})\\
     & M_k^f & \overset{\mathcal{S}}{\longrightarrow} & M_k^g & 
   \end{array} . \]
With the very specific process $B$ to be chosen, we want $M^g = \mathcal{S}
M^f$, where
\begin{equation}
  (\mathcal{S} M^f)_k \assign \sum_{l = 1}^k \nabla f (B_{l - 1} (x)) \cdot
  \mathcal{S} \mathd B_l (x) . \label{eq:SMf}
\end{equation}
But rewriting $M^g$ in \eqref{eq:Mfn Mgn} as
\begin{equation}
  M_k^g (x) = \sum_{l = 1}^k \nabla^{\perp} f (B_{l - 1} (x)) \cdot \mathd B_l
  (x) = \sum_{l = 1}^k \nabla f (B_{l - 1} (x)) \cdot \mathd B_l^{\top} (x),
  \label{eq:Mg bis}
\end{equation}
where $\mathd B^{\top}_k$denotes the vector anticlockwise orthogonal to
$\mathd B_k$, and comparing \eqref{eq:SMf} with \eqref{eq:Mg bis} suggests
that the discrete random walk should have the property
\begin{equation}
  \mathcal{S} \mathd B_l {= \mathd B^{\top}_l}  . \label{eq:SdB}
\end{equation}
Finally, notice that \eqref{eq:Mg bis} is the discrete counterpart of
\[ \mathcal{M}_t^g \assign \int_0^{\tau} \nabla f (W_s)^{\perp} \cdot \mathd
   W_s = \int_0^{\tau} \nabla f (W_s) \cdot \mathd W^{\top}_s \]
where $\mathd W^{\top}_s$denotes the vector anticlockwise orthogonal to
$\mathd W_s$.

\begin{remark}
  A fundamental difference between the dyadic Hilbert transform $\mathcal{S}$
  and the classical Haar shift $\mathcal{S}_{\tmop{cl}}$ is revealed by
  equation \eqref{eq:SdB}. The classical shift transforms the values of a
  given function $f$ in a way similar to the Hilbert transform on
  trigonometric functions. Indeed, it transforms a Haar function $h_I$, that
  we can understand as a dyadic counterpart of the sinus function, into a
  dyadic counterpart of the cosinus function.
  
  Equation \eqref{eq:SdB} for the dyadic Hilbert transform  S  has a more
  abstract interpretation, namely that $\mathcal{S}$ acts on the probability
  space by reordering random walks and sending a random walk $(B_k)_{k
  \geqslant 0}$ to $(B_k^{\top})_{k \geqslant 0}$. Similarly, the Hilbert
  transform can be interpreted as sending an occurence $(W_t)_{t \geqslant 0}$
  of the Brownian motion to its rotated version $(W_t^{\top})_{t \geqslant
  0}$.
\end{remark}

\

\section{ random walks, stopping times and scalings}

\

\paragraph{Construction of the discrete random walk}The set of dyadic
intervals $\mathcal{D}$ in the interval $I_0 \assign [0, 1)$ is
\[ \mathcal{D} \assign \left\{ [m 2^{- k}, (m + 1) 2^{- k}) ; \quad 0
   \leqslant k, 0 \leqslant m \leqslant 2^k - 1 \right\} = \bigcup_{k
   \geqslant 0} \mathcal{D}_k \]
where $\mathcal{D}_k \assign \left\{ [m 2^{- k}, (m + 1) 2^{- k}) ; \quad 0
\leqslant m \leqslant 2^k - 1 \right\}$ is the set of dyadic intervals in
generation $k$ and having length $2^{- k}$. We note $\mathcal{D}^- \subset
\mathcal{D}$ the subset of left children and $\mathcal{D}^+ \subset
\mathcal{D}$ the subset of right children. We have therefore $\mathcal{D} = \{
I_0 \} \cup \mathcal{D}^- \cup \mathcal{D}^+$. We also note $\mathcal{D}_k^-
\assign \mathcal{D}_k \cap \mathcal{D}^-$ the left children of intervals of
$\mathcal{D}_k$, and $\mathcal{D}_k^+ \assign \mathcal{D}_k \cap
\mathcal{D}^+$ the right children of intervals of $\mathcal{D}_k$. Let the
sequence $(\varepsilon_k)_{k \geqslant 0}$ of random variables be defined as
\[ \forall k \geqslant 0, \forall x \in [0, 1), \quad \varepsilon_k (x) =
   \left\{ \begin{array}{ll}
     - 1, & x \in \mathcal{D}_{k + 1}^-\\
     1, & x \in \mathcal{D}_{k + 1}^+ .
   \end{array} \right. \]
In other words, if $k \geqslant 0$ and $I \in \mathcal{D}_k$, the random
variable $\varepsilon_k$ takes values $\pm 1$ on $I$ with equal probability.
Equivalently in terms of the Haar functions
\[ \forall k \geqslant 0, \forall x \in [0, 1), \quad \varepsilon_k (x) =
   \sum_{I \in \mathcal{D}_k} \sqrt{| I |} h_I (x) . \]
Let now $\delta > 0$. We build a twodimensional random walk $B_k (x) = (B_k^1
(x), B_k^2 (x))$ defined as
\[ \{ \begin{array}{l}
     B^1_k (x) \assign \sum_{l = 1}^k \tmmathbf{1} (\varepsilon_{l - 1} (x) =
     - 1) \varepsilon_l (x)  \sqrt{2 \delta} \backassign \sum_{l = 1}^k \mathd
     B_l^1 (x)\\
     B^2_k (x) \assign \sum_{l = 1}^k \tmmathbf{1} (\varepsilon_{l - 1} (x) =
     + 1) \varepsilon_l (x)  \sqrt{2 \delta} \backassign \sum_{l = 1}^k \mathd
     B_l^2 (x),
   \end{array} \]
with $B_0 (x) = (0, 0)$ for all $x$. It follows successively, for any $l
\geqslant 1$,
\begin{eqnarray}
  \mathd B^1_l (x) & \assign & \tmmathbf{1} (\varepsilon_{l - 1} (x) = - 1)
  \varepsilon_l (x)  \sqrt{2 \delta} = \sqrt{2 \delta} \sum_{I \in
  \mathcal{D}_l^-} \sqrt{| I |} h_I (x) \nonumber\\
  \mathcal{S} \mathd B^1_l (x) & \assign & \sqrt{2 \delta} \sum_{I \in
  \mathcal{D}_l^-} \sqrt{| I |}  \mathcal{S} h_I (x) = \sqrt{2 \delta} \sum_{I
  \in \mathcal{D}_l^+} \sqrt{| I |}  (- h_I (x)) = - \mathd B^2_l (x)
  \nonumber\\
  \mathd B^2_l (x) & \assign & \tmmathbf{1} (\varepsilon_{l - 1} (x) = + 1)
  \varepsilon_l (x)  \sqrt{2 \delta} = \sqrt{2 \delta} \sum_{I \in
  \mathcal{D}_l^+} \sqrt{| I |} h_I (x) \nonumber\\
  \mathcal{S} \mathd B^2_l (x) & \assign & \sqrt{2 \delta} \sum_{I \in
  \mathcal{D}_l^+} \sqrt{| I |}  \mathcal{S} h_I (x) = \sqrt{2 \delta} \sum_{I
  \in \mathcal{D}_l^-} \sqrt{| I |}  (+ h_I (x)) = + \mathd B^1_l (x) .
  \nonumber
\end{eqnarray}
As a conclusion, denoting $\mathd B_l \assign (\mathd B^1_l, \mathd B^2_l)$ a
twodimensional increment, we have $\mathcal{S} \mathd B_l = \mathd B_l^{\top}$
as desired. We define now the discrete martingales associated to $f$ and $g =
\mathcal{H} f$ respectively, as
\[ \forall k, \quad M^f_k \assign f (B_0) + \sum_{l = 1}^k \nabla f (B_{l -
   1}) \cdot \mathd B_l, \quad M^g_k \assign g (B_0) + \sum_{l = 1}^k \nabla g
   (B_{l - 1}) \cdot \mathd B_l, \]
and observe:

\begin{lemma}
  \label{L: Lp estimate for Mng}Let $M^f$ and $M^g$ as above. We have
  $\mathcal{S} M^f = M^g$ and therefore also
  \[ \forall k, \quad \| M_k^g \|_p \leqslant \| \mathcal{S} \|_{p \rightarrow
     p} \| M_k^f \|_p . \]
\end{lemma}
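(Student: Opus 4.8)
The plan is to read the displayed identity $\mathcal{S}M^f = M^g$ as an equality of functions on $\Omega = [0,1)$, obtained by applying the operator $\mathcal{S}$ to $M^f_k$; once this is in hand the norm estimate is immediate, since then $\| M^g_k \|_p = \| \mathcal{S}M^f_k \|_p \leqslant \| \mathcal{S} \|_{p \rightarrow p} \| M^f_k \|_p$. So the entire content is the identity itself, and I would establish it by computing $\mathcal{S}M^f_k$ directly, feeding in the three ingredients already prepared above: that $\mathcal{S}$ annihilates constants (and $h_{I_0}$), that $\mathcal{S} \mathd B_l = \mathd B^{\top}_l$ from \eqref{eq:SdB}, and the pointwise orthogonality identity \eqref{eq:Mg bis}.

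Concretely, applying $\mathcal{S}$ to $M^f_k = f(B_0) + \sum_{l=1}^k \nabla f(B_{l-1}) \cdot \mathd B_l$, the constant $f(B_0) = f(0,0)$ is killed, so by linearity $\mathcal{S}M^f_k = \sum_{l=1}^k \mathcal{S}\bigl( \nabla f(B_{l-1}) \cdot \mathd B_l \bigr)$. The decisive step is to pass $\mathcal{S}$ through the multiplier $\nabla f(B_{l-1})$, reducing each summand to $\nabla f(B_{l-1}) \cdot \mathcal{S} \mathd B_l$, which is exactly the increment-wise action \eqref{eq:SMf}. Using \eqref{eq:SdB} this equals $\nabla f(B_{l-1}) \cdot \mathd B^{\top}_l$, and the orthogonality identity \eqref{eq:Mg bis} rewrites it as $\nabla^{\perp} f(B_{l-1}) \cdot \mathd B_l = \nabla g(B_{l-1}) \cdot \mathd B_l$, the $l$-th increment of $M^g$ (using $g(0,0) = 0$ and $\nabla g = \nabla^{\perp} f$). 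Summing over $l$ yields $\mathcal{S}M^f_k = M^g_k$.

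The main obstacle is precisely the commutation of the operator $\mathcal{S}$ with the predictable multiplier, i.e. the identity $\mathcal{S}\bigl( \nabla f(B_{l-1}) \cdot \mathd B_l \bigr) = \nabla f(B_{l-1}) \cdot \mathcal{S} \mathd B_l$; this is what promotes the formal increment-wise expression \eqref{eq:SMf} to an honest statement about the operator $\mathcal{S}$, which is what the norm inequality requires. The delicacy is a mismatch of scales: $\mathcal{S}$ sends $h_I$ to a multiple of the Haar function of its sibling, hence operates at the scale of the parent of $I$, whereas the multiplier $\nabla f(B_{l-1})$ is only guaranteed to be constant at the finer scale of $I$ itself. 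To handle this I would expand each increment in the Haar basis — it is supported on a single dyadic generation, since $\nabla f(B_{l-1})$ is constant on every interval carrying $\mathd B_l$ — and then check directly, sibling pair by sibling pair, that applying $\mathcal{S}$ to these coefficients reproduces exactly the Haar coefficients of the matching increment of $M^g$. This coefficient bookkeeping, which is where the very specific construction of the walk $B$ is used, is the crux; carrying it out for every $l$ and summing gives $\mathcal{S}M^f_k = M^g_k$, whence $\| M^g_k \|_p \leqslant \| \mathcal{S} \|_{p \rightarrow p} \| M^f_k \|_p$.
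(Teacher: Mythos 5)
Your first two paragraphs reproduce, in substance, the paper's own computation: kill the constant $f(B_0)$, use $\mathcal{S} \mathd B_l = \mathd B_l^{\top}$, and rewrite $\nabla f (B_{l-1}) \cdot \mathd B_l^{\top} = \nabla f (B_{l-1})^{\perp} \cdot \mathd B_l = \nabla g (B_{l-1}) \cdot \mathd B_l$. The divergence is in the interpretation of the symbol $\mathcal{S} M^f$, and your third paragraph --- the step you rightly call the crux --- is where the proposal breaks. The paper never proves, nor claims, the commutation $\mathcal{S} \bigl( \nabla f (B_{l-1}) \cdot \mathd B_l \bigr) = \nabla f (B_{l-1}) \cdot \mathcal{S} \mathd B_l$: it takes \eqref{eq:SMf} as the \emph{definition} of $(\mathcal{S} M^f)_k$ (note the $\assign$ there and in the proof), so that $\mathcal{S}$ acts on the increments $\mathd B_l$ only and the predictable multipliers are never touched; under that reading the displayed identity of the lemma is exactly the two-line computation you already gave, and nothing more is needed for it.

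The commutation you propose to establish by exact Haar-coefficient bookkeeping is in fact false, so your ``sibling pair by sibling pair'' check would not close. Fix $l \geqslant 2$ and a parent $J \in \mathcal{D}_{l-1}$ with children $J_{\pm}$, and set $b_{\pm} \assign B_{l-1} |_{J_{\pm}}$. Since $B_{l-1}$ depends on $\varepsilon_{l-1}$, which separates $J_-$ from $J_+$, one has $b_+ - b_- = \pm 2 \sqrt{2 \delta} \, e_i \neq 0$ (the coordinate $i$ determined by $\varepsilon_{l-2} |_J$). On $J$ the $l$-th increment of $M^f$ equals $\partial_1 f (b_-) \sqrt{2 \delta} \sqrt{|J_-|} \, h_{J_-} + \partial_2 f (b_+) \sqrt{2 \delta} \sqrt{|J_+|} \, h_{J_+}$, and applying the honest operator $\mathcal{S}$ transports each coefficient to the sibling interval: on $J_+$ one obtains the multiplier value $\partial_1 f (b_-)$, whereas the $l$-th increment of $M^g$ carries $\partial_1 f (b_+)$ there (and symmetrically on $J_-$). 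The discrepancy $\nabla f (b_+) - \nabla f (b_-) = O ( \| \mathD^2 f \|_{\infty} \sqrt{\delta} )$ is exactly the one-generation scale defect you identified, and it does not cancel: $\mathcal{S} (M^f_k)$, with $\mathcal{S}$ the actual $L^p (\Omega)$ operator, differs from $M_k^g$ by a nonzero martingale whose increments have pointwise size $O (\delta)$. Your underlying concern is nevertheless legitimate --- the lemma's ``therefore'' silently invokes the operator interpretation --- but the available repair is not an exact identity: one must freeze the multiplier at the parent scale, $\nabla f (B_{l-1}) = \mathbb{E} ( \nabla f (B_{l-1}) \, | \, \mathcal{F}_{l-2} ) + O (\sqrt{\delta})$, and estimate the resulting error martingale by $c_T (\varepsilon)$, in the spirit of the block estimates in the proof of Lemma \ref{L: convergence discrete martingale transforms}; such an approximate identity still suffices for Theorem \ref{T: H less than S} after the limits $\varepsilon \rightarrow 0$, $T \rightarrow \infty$.
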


\begin{proof}
  Since $\mathcal{S} \mathd B_l = \mathd B_l^{\top},$ $\nabla g =
  \nabla^{\perp} f$ and $g (B_0) = 0$, we have successively, for all $k
  \geqslant 1$,
  \begin{eqnarray*}
    (\mathcal{S} M^f)_k & \assign & \mathcal{S} \left( f (B_0) + \sum_{l =
    1}^k \nabla f (B_{l - 1}) \cdot \mathd B_l \right) \assign 0 + \sum_{l =
    1}^k \nabla f (B_{l - 1}) \cdot \mathcal{S} \mathd B_l\\
    & = & \sum_{l = 1}^k \nabla f (B_{l - 1}) \cdot \mathd B_l^{\top} =
    \sum_{l = 1}^k \nabla f (B_{l - 1})^{\perp} \cdot \mathd B_l\\
    & = & M_k^g .
  \end{eqnarray*}
  Hence the result.
\end{proof}

In order to obtain from the inequality above an inequality for the continuous
martingales $\mathcal{M}^f$ and $\mathcal{M}^g$, we need to prove in a
suitable sense the convergence of the discrete martingales $M^f$ and $M^g$
towards their continuous counterparts $\mathcal{M}^f$ and $\mathcal{M}^g$.
Since we are only interested in norms of those processes, what we need to
obtain is some so--called weak convergence estimate, see e.g. {\tmname{Talay}}
or {\tmname{Kloeden}}--{\tmname{Platen}} {\cite{Tal1986n,KloPla1992}}.
Unfortunately, our situation does not exactly fit those expositions, for the
following reasons:
\begin{itemizedot}
  \item We consider randomly stopped processes as opposed to processes running
  on a prescribed, deterministic, interval of time.
  
  \item The discrete random walk is \tmtextit{not} a Markov process, rather a
  so--called Markov process with memory
  
  \item The quadratic covariations of the discrete process are never
  converging towards that of their continuous counterparts, since they can be
  null half of the time.
  
  \item We do not compare only discrete random walks and their continuous
  counterparts but also their martingale transforms $M^f$ and $M^g$.
\end{itemizedot}
We define in the next section some auxiliary processes and suitable stopped
random walks. Only then will we be able to state in a precise manner at the
end of this section the convergence result Theorem \ref{T: convergence} suited
to our needs.

\

\paragraph{Scalings and stopped random walks}Let $W \assign (W_t)_{t
\geqslant 0}$ the standard two dimensional Brownian process started at the
origin. Let again $\tau$ the stopping time
\[ \tau \assign \inf \{ t > 0 ; W_t \nin \mathbb{D} \}, \]
that is the first time of exit of the unit disc $\mathbb{D}$. We denote by
$W^{\tau} = (W^{\tau}_t)_{t \in [0, \infty)} \assign (W_{t \wedge \tau})_{t
\in [0, \infty)}$ the corresponding stopped process. Finally, given $t \in
\mathbb{R}$, $x \in \mathbb{D}$, we note $(W^{\tau, t, x}_{t + s})_{s
\geqslant 0}$ the process $W^{\tau}$ started at $x$ at time $t$.

\

Let $T > 0$ a fixed time, and $N \in \mathbb{N}$ large. Define $\delta$ a
small time--step such that $T = N^5 \delta$. In order to denote the
corresponding discrete times, we will use the indices $k, l \in [0, N^5]$,
typically $t_k \assign k \delta$, $t_l \assign l \delta$. We introduce a
larger time-step $\theta$ defined as $\theta \assign N \delta$. Notice that $T
= N^4 \theta$, so that also $\theta$ tends to zero as $N$ goes to infinity for
fixed $T$. The discrete times corresponding to this time--step will use
indices $n, m \in [0, N^4]$, typically $t_n \assign n \theta$, $t_m \assign m
\theta$. Given the discrete random walk $B$ defined above, we define a new
discrete random walk $X$ by sampling $B$ at times that are multiples of
$\theta = N \delta$, therefore with indinces that are multiples of $N$,
\[ \forall n \geqslant 0, \quad X_n \assign B_{n N} . \]
Notice further that
\[ \mathd X_n \assign X_n - X_{n - 1} = B_{n N} - B_{(n - 1) N} = \sum_{l =
   1}^N \mathd B_{(n - 1) N + l} . \]
In order to stop $X$ before it leaves the unit disc, we set $\varepsilon = 1 /
N$ and define the discrete stopping time
\[ \tau_{\varepsilon} \assign \inf \{ t_n ; X_n \nin (1 - \varepsilon)
   \mathbb{D} \} . \]
We will denote by $n_{\varepsilon}$ the random index such that
$\tau_{\varepsilon} \assign n_{\varepsilon} \theta$. \ From the definition of
$X$ we have for all $n$,
\[ | X_n - X_{n - 1} | \assign | \mathd X_n | \leqslant \sum_{l = 1}^N |
   \mathd B_{(n - 1) N + l} | \leqslant N \sqrt{2 \delta} = \sqrt{2 T} N^{- 3
   / 2} . \]
Therefore $| \mathd X_n | \leqslant \varepsilon$ for $N$ large enough. In
particular, before a stopping time we have $X_{\tau_{\varepsilon} - \theta}
\in (1 - \varepsilon) \mathbb{D}$ which implies that the stopped process
$X^{\tau_{\varepsilon}}$ always remains in the unit disc $\mathbb{D}$.
Moreover at stopping time, the random walk $X_{\tau_{\varepsilon}}$ lies in
the band $\mathbb{D} \backslash (1 - \varepsilon) \mathbb{D}$ of width
$\varepsilon$ near the interior boundary $\partial \mathbb{D}$ of the unit
disc. We note $X^{\tau_{\varepsilon}}$ the corresponding stopped process.
Finally we will denote by $k_{\varepsilon}$ the random index such that
$\tau_{\varepsilon} \assign k_{\varepsilon} \delta$, or equivalently
$k_{\varepsilon} \assign N n_{\varepsilon}$. We note $B^{\tau_{\varepsilon}}$
the process $B$ stopped at $\tau_{\varepsilon}$.

Notice that the discrete martingales $(M_k^f)_{k \in [0, N^5]}$ and
$(M_k^g)_{k \in [0, N^5]}$ are martingale transforms of the discrete random
walk $(B_k)_{k \in [0, N^5]}$. We will aslo consider a sampled version of
$(M_k^f)_{k \in [0, N^5]}$, namely $(M_n^f)_{n \in [0, N^4]}$, where $M_n^f
\assign M^f_k$ for $k = n N$. We use the same notation for both processes.
Which one is meant will be clear from the context. Similarly we note
$(\mathcal{F}_k)_{k \in [0, N^5]}$ the filtration associated to the discrete
random walk $(B_k)_{k \in [0, N^5]}$ and $(\widetilde{\mathcal{F}}_n)_{n \in
[0, N^4]}$, with $\widetilde{\mathcal{F}}_n \assign \mathcal{F}_{n N}$, the
filtration associated to the coarse random walk $(X_n)_{n \in [0, N^4]}$.

\

\paragraph{Notations}We assume without loss of generality that $f \in L^p
(\partial \mathbb{D}, X)$ and its harmonic extension (also noted $f$) $f \in
L^p (\mathbb{D}, X)$ are smooth Frechet differentiable functions, that is $f
\in \mathcal{C}^k$ for all $k \geqslant 0$. We note $| f (x) | \assign | f (x)
|_X$ the Banach space norm of $f (x)$. The $L^p$--norm of $f$ is defined on
the circle in the usual way
\[ \| f \|_p \assign \| f \|_{L^p (\partial \mathbb{D}, X)} \assign \left(
   \int_{\partial \mathbb{D}} | f (x) |^p \mathd x \right)^{1 / p} . \]
However if $Y$ is an $L^p$ integrable random variable on the probability space
$(\Omega, \mathbb{P})$, the stochastic $L^p$ norm is defined as
\[ \| Y \|_p \assign \mathbb{E} (| Y |^p)^{1 / p} . \]
If $Y$ is real valued, then $| Y |$ denotes the absolute value of $Y$ whereas
if $Y$ is $X$--valued, then $| Y | \assign | Y |_X$ denotes the Banach space
norm of $Y$.

Now, if $f \assign f (x_1, \ldots, x_m)$ is a $X$--valued function of $m$
variables defined on the open set $U \subset \mathbb{R}^m$, we note $\mathD f
\assign (\partial_1 f, \ldots, \partial_m f)$ its derivatives in the Frechet
sense, where $\mathD f : U \times \mathbb{R}^m \rightarrow X$ is continuous.
Further, given a $m$--multiindex $\alpha \assign (\alpha_1, \alpha_2, \ldots,
\alpha_m)$ with $| \alpha | = k$ we note as usual $\mathD^{\alpha} f \assign
\partial_1^{\alpha_1} \ldots \partial_m^{\alpha_m} f$ its partial derivatives
of order $k$ in the Frechet sense, where $\mathD^{\alpha} f : U \times
(\mathbb{R}^m)^k \rightarrow X$ is continuous. Finally,using again
multiindices, monomials of the from $x^{\alpha}$, where $x \assign (x_1,
\ldots, x_m)$, are a shorthand for $x^{\alpha} \assign x_1^{\alpha_1} \ldots
x_m^{\alpha_m}$.

The dual space of $X$ is noted $X^{\ast}$, and the dual space of $L^p (U, X)$
is noted $L^q (U, X^{\ast})$ with $1 / p + 1 / q = 1$.

For the convergence results, our main parameters are $\varepsilon > 0$, a
small number, and $T > 0$, a large number. We note $c (\varepsilon)$ a generic
function tending to $0$ uniformly in $T$ when $\varepsilon$ goes to zero, $c_T
(\varepsilon)$ a generic function tending to $0$ when $\varepsilon$ goes to
zero for any fixed $T$, $c (T)$ a generic function tending to $0$ when $T$
goes to infinity, uniformely in $\varepsilon$. It is implicit that those
functions all depend on the fixed function $f$ and its derivatives. Further
dependences will be mentionned when needed.

\

\paragraph{Convergence result}In order to prove Theorem \ref{T: H less than
S}, we need the following convergence result of the discrete martingales
towards their continuous counterparts.

\begin{theorem}[Convergence of $L^p$ norms of martingales]
  \label{T: convergence}Let $f$ as above. We have
  \[ \lim_{T \rightarrow \infty} \lim_{\varepsilon \rightarrow 0} \mathbb{E} |
     M_T^f |^p =\mathbb{E} | \mathcal{M}^f_{\infty} |^p \]
\end{theorem}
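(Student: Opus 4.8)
The plan is to split the iterated limit according to the two distinct roles of the parameters. The inner limit $\varepsilon\to0$ (equivalently $N\to\infty$ with $T$ fixed, since $\varepsilon=1/N$ and $\delta = T/N^5$) is the genuine discretization limit, in which I would show, for each fixed $T$,
\[
  \lim_{\varepsilon\to0}\mathbb{E}|M_T^f|^p = \mathbb{E}|\mathcal{M}^f_{\tau\wedge T}|^p = \mathbb{E}|f(W_{\tau\wedge T})|^p .
\]
The outer limit $T\to\infty$ then involves only the continuous object: Brownian motion exits the bounded disc in finite time, so $\tau<\infty$ almost surely, and since $f$ is bounded on $\overline{\mathbb{D}}$ we get $W_{\tau\wedge T}\to W_{\tau}$ a.s.\ and, by dominated convergence, $\mathbb{E}|f(W_{\tau\wedge T})|^p\to\mathbb{E}|f(W_\tau)|^p=\mathbb{E}|\mathcal{M}^f_\infty|^p$. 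Thus all the analytic content sits in the inner limit.

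For the inner limit I would first rewrite the martingale transform in terms of the walk itself. A second--order Taylor expansion of $f$ along $B$ gives
\[
  f(B_{k}) = M^f_k + \tfrac12\sum_{l=1}^{k}\mathD^2 f(B_{l-1})(\mathd B_l,\mathd B_l) + \sum_{l=1}^k R_l ,
\]
with $R_l$ of third order in $|\mathd B_l|=\sqrt{2\delta}$. Because exactly one coordinate of $\mathd B_l$ is active at each fine step, the quadratic term equals $2\delta\,[\tmmathbf{1}(\varepsilon_{l-1}=-1)\,\partial_{11}f+\tmmathbf{1}(\varepsilon_{l-1}=+1)\,\partial_{22}f](B_{l-1})$, which is $\mathcal{F}_{l-1}$--measurable: this is precisely the memory/non--Markov feature, and the reason the pathwise quadratic variation never converges to that of Brownian motion. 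The crucial observation is that this correction cannot be handled step by step, but only after summing over a coarse block of $N$ consecutive fine steps: since the $(\varepsilon_k)$ are i.i.d.\ fair signs, the count $\#\{l\in\text{block}:\varepsilon_{l-1}=-1\}$ concentrates around $N/2$, the block sum of the quadratic form is $\theta\,\Delta f(B_{l-1})+(\text{fluctuation})=0+(\text{fluctuation})$ by harmonicity, and the fluctuations are centred, block--independent and of order $\delta\sqrt{N}$. A second--moment estimate then bounds the full correction in $L^2$ by $O(T\,N^{-1/2})\to0$, the contributions of the variation of $\mathD^2 f$ over a block (of diameter $O(\sqrt{\theta})$) and of the third--order remainder $\sum R_l=O(N^{-5/2})$ being of smaller order. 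Since $|M_T^f|$ is moreover bounded almost surely by a constant depending only on $f$ and $T$ (uniformly in $N$, as the deterministic bound $\|\mathD^2 f\|_\infty T$ on the correction is finite), all these $L^2$ estimates promote to $L^p$.

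It remains to identify the limit of $f(B_{k_\varepsilon})=f(X_{n_\varepsilon})$, the value of the coarse walk $X_n=B_{nN}$ at its stopping index. Here I would invoke a martingale functional central limit theorem for $(X_n)$: over a coarse step the increment $\mathd X_n=\sum_{l=1}^N\mathd B_{(n-1)N+l}$ has conditional mean zero and conditional covariance $\approx\theta\,I_2$ (each coordinate accumulating variance $2\delta$ over its $\approx N/2$ active steps, the memory being averaged out by the coarse--graining), with uniformly small steps $|\mathd X_n|\leqslant\sqrt{2T}\,N^{-3/2}$, so that the interpolated process converges in law to standard planar Brownian motion $W$ on $[0,T]$. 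Continuity of the first--exit functional from $\mathbb{D}$ at almost every Brownian path (the exit being transversal a.s.), together with $\varepsilon\to0$ shrinking the stopping boundary $(1-\varepsilon)\mathbb{D}$ to $\partial\mathbb{D}$, then yields $X_{n_\varepsilon}\Rightarrow W_{\tau\wedge T}$, hence $f(X_{n_\varepsilon})\Rightarrow f(W_{\tau\wedge T})$; combined with the $L^p$--smallness of the correction and Slutsky's lemma, $M_T^f\Rightarrow f(W_{\tau\wedge T})$. As $|M_T^f|^p$ is uniformly bounded, Portmanteau upgrades this to the convergence of expectations displayed above.

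The main obstacle will be the cancellation of the It\^o correction in the second step. Unlike a genuine Markov walk, the per--step quadratic term is frozen by $\varepsilon_{l-1}$ and never individually averages to $\tfrac12\Delta f\,\mathd t$, so harmonicity is useless at the fine scale; one is forced to the intermediate scale $\theta=N\delta$ and must quantify simultaneously how fast the block counts concentrate around $N/2$, how little $\mathD^2 f$ varies across a block, and that the single block straddling the random time $\tau_\varepsilon$ is negligible. Balancing the three scales $\delta\ll\theta\ll T$ against $\varepsilon=1/N$, so that discretization error, block fluctuation and boundary--layer width all vanish together, is the delicate bookkeeping the proof must carry out.
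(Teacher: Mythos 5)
Your proposal is correct in outline, and its first half coincides with the paper's own argument: the reduction of $M_T^f$ to $f(X_T^{\tau_\varepsilon})$ --- Taylor expansion along $B$, the identity $\tmmathbf{1}(\varepsilon=\pm1)=\tfrac12\pm\tfrac{\varepsilon}{2}$ combined with harmonicity turning the frozen per-step quadratic term into the centred fluctuation $\delta\sum_l\partial^2_{22}f(B_{l-1})\varepsilon_{l-1}$, the block decomposition at scale $\theta=N\delta$ with the coefficient frozen at the block start, and the moment bound $\|\mathd X_n\|_p\lesssim\theta^{1/2}$ --- is precisely the content of Lemma~\ref{L: convergence discrete martingale transforms} and the moment estimates of Lemma~\ref{L: discrete moments} (the paper even settles for the cruder triangle inequality over blocks where you invoke block independence in $L^2$, so your rates are, if anything, better). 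Where you genuinely diverge is the second half. The paper never proves convergence in law of $X^{\tau_\varepsilon}$ to $W^{\tau}$: instead it tests against the harmonic extension $\psi_\eta$ of a mollified $|f|^p$, exploits that $\mathbb{E}\,\psi(W_T^{\tau})=\psi(0,0)$ exactly by harmonicity and optional stopping, and shows through the quantitative ``weak consistency'' moment-matching of Lemmas~\ref{L: weak consistency}--\ref{L: continuous moments} that $\mathbb{E}\,\psi_\eta(X_T^{\tau_\varepsilon})$ equals the same constant up to $c_{\psi,T}(\varepsilon)+c(T)$ (Lemma~\ref{L: weak convergence}), with the discrete hitting bound $\mathbb{P}(\tau_\varepsilon>T)=c(T)$ obtained by rotating $B$ into two standard walks; the theorem then follows from the three-term split of $\|f(W_\infty^\tau)\|_p-\|M_T^f\|_p$. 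You replace all of this by a martingale functional CLT plus continuous mapping and Portmanteau. That buys a cleaner separation of the two limits (your inner limit is exact, so you need no mollification, no harmonic test functions, and no discrete hitting-time estimate), and it correctly answers the paper's stated obstacles: a martingale FCLT requires no Markov property, only convergence of the integrated conditional covariance to $t\,I_2$, which your coarse-graining restores even though the pathwise quadratic covariation of $B$ never converges. What it costs is that the step you dispatch in one clause --- almost-sure continuity of the first-exit functional under the coupled limit (stopping only at coarse grid times, at the shrunken disc $(1-\varepsilon)\mathbb{D}$, with convergence in law rather than pathwise) --- is exactly the delicate point the paper's consistency machinery is built to avoid; the claim is true, since $\partial\mathbb{D}$ is regular and Brownian motion exits transversally a.s.\ (invariance principles for exit times and exit places of the type found in {\cite{Law2010a}}), but it deserves a precise citation or proof rather than a parenthesis, and your route is purely qualitative where the paper's yields explicit rates in $\varepsilon$.
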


A key ingredient is the the notion of weak consistency presented in the next
Section \ref{S: weak consistency}. This allows us to prove auxiliary
convergence results in Section \ref{S: auxiliary convergence results}. Finally
Section \ref{S: main results} is devoted to the proofs of the main results,
namely Theorem \ref{T: convergence} and Theorem \ref{T: H less than S}.

\section{Weak consistency and moments estimates.}\label{S: weak consistency}

Convergence results will be obtained by proving the weak consistency of the
(sampled) stopped discrete random walk $X^{\tau_{\varepsilon}}$ with the
continuous stopped twodimensional Brownian process $W^{\tau}$. Due to the
stopping process, we can not rely on the standard definition of weak
consistency based on discrete and continuous stochastic equations with
prescribed coefficients depending smoothly on the process alone as used in
e.g. {\cite{Tal1986n,KloPla1992}}. The definition adapted to our situation
simply reads:

\begin{definition}
  We say that $X^{\tau_{\varepsilon}}$ is weakly consistent with $W^{\tau}$,
  iff there exists a function $c \assign c (\varepsilon)$ tending to zero when
  $\varepsilon$ goes to zero, such that for all $n$, all $2$--multiindex
  $\alpha \assign (\alpha_1, \alpha_2)$ with $| \alpha | = 2$, there holds
  \[ \left| \mathbb{E} (X^{\tau_{\varepsilon}}_{n + 1} -
     X^{\tau_{\varepsilon}}_n | \widetilde{\mathcal{F}}_n) -\mathbb{E} \left(
     W^{\tau, \tau_{\varepsilon}, X_n^{\tau_{\varepsilon}}}_{t_{n + 1}} -
     W_{t_n}^{\tau, \tau_{\varepsilon}, X_n^{\tau_{\varepsilon}}} |
     \widetilde{\mathcal{F}}_n \right) \right| \leqslant \theta c
     (\varepsilon), \]
  \[ \left| \mathbb{E} ((X^{\tau_{\varepsilon}}_{n + 1} -
     X^{\tau_{\varepsilon}}_n)^{\alpha} | \widetilde{\mathcal{F}}_n)
     -\mathbb{E} \left( \left( W^{\tau, \tau_{\varepsilon},
     X_n^{\tau_{\varepsilon}}}_{t_{n + 1}} - W_{t_n}^{\tau,
     \tau_{\varepsilon}, X_n^{\tau_{\varepsilon}}} \right)^{\alpha} |
     \widetilde{\mathcal{F}}_n \right) \right| \leqslant \theta c
     (\varepsilon) . \]
\end{definition}

We can now state

\begin{lemma}
  \label{L: weak consistency} The discrete stopped process
  $X^{\tau_{\varepsilon}}$ is weakly consistent with the continuous stopped
  process $W^{\tau}$.
\end{lemma}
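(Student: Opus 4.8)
The plan is to verify the two moment bounds in the definition of weak consistency by computing the one coarse-step conditional moments of $X^{\tau_{\varepsilon}}$ explicitly and comparing them with those of the stopped Brownian motion started at the same point. On the event $\{ t_n \geqslant \tau_{\varepsilon} \}$, which is $\widetilde{\mathcal{F}}_n$-measurable, both processes are frozen at $X_n^{\tau_{\varepsilon}}$ and every increment vanishes, so the estimates hold trivially; hence I restrict attention to the event $\{ t_n < \tau_{\varepsilon} \}$, on which $X_n^{\tau_{\varepsilon}} = X_n \in (1 - \varepsilon) \mathbb{D}$ and the coarse increment is the free increment $\mathd X_{n + 1} = \sum_{j = 1}^N \mathd B_{n N + j}$. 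The single structural fact driving everything is that $(\varepsilon_k)_{k \geqslant 0}$ is a sequence of independent Rademacher variables adapted to $(\mathcal{F}_k)$, so that each fine increment $\mathd B_l$ is a martingale difference taking a value $\pm \sqrt{2 \delta}$ in exactly one of the two coordinates, the active coordinate being measurable with respect to $\mathcal{F}_{l - 1}$.

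First I would dispose of the first-moment estimate. Since $X$ is a martingale transform of the Rademacher sequence and $W^{\tau}$ is a stopped, hence bounded, continuous martingale, both conditional mean increments $\mathbb{E} (\mathd X_{n + 1}^{\tau_{\varepsilon}} \mid \widetilde{\mathcal{F}}_n)$ and $\mathbb{E} ( W^{\tau, \tau_{\varepsilon}, X_n^{\tau_{\varepsilon}}}_{t_{n + 1}} - W_{t_n}^{\tau, \tau_{\varepsilon}, X_n^{\tau_{\varepsilon}}} \mid \widetilde{\mathcal{F}}_n )$ vanish identically. Thus the first inequality holds with $c (\varepsilon) = 0$, with no estimation needed.

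The second-moment estimate is the substance. On the discrete side I would expand, for a $2$-multiindex $\alpha$ with $| \alpha | = 2$, the product $(\mathd X_{n + 1})^{\alpha}$ into a double sum over fine indices and condition on $\mathcal{F}_{n N}$. The martingale-difference property kills every off-diagonal term $j \neq j'$ (conditioning on the intermediate $\sigma$-field, the later factor has mean zero), and since the two coordinates are never simultaneously active, the mixed moment $\mathbb{E} (\mathd X^1_{n + 1} \mathd X^2_{n + 1} \mid \mathcal{F}_{n N})$ is exactly $0$. For the diagonal terms I use $(\mathd B^1_l)^2 = \mathbf{1} (\varepsilon_{l - 1} = - 1) \, 2 \delta$ together with $\mathbb{P} (\varepsilon_{l - 1} = - 1 \mid \mathcal{F}_{n N}) = \tfrac{1}{2}$ for all future indices $l \geqslant n N + 2$ and the $\mathcal{F}_{n N}$-measurability of the single boundary term $\varepsilon_{n N}$, obtaining
\[ \mathbb{E} \big( (\mathd X^1_{n + 1})^2 \mid \mathcal{F}_{n N} \big) = (N - 1) \delta + 2 \delta \, \mathbf{1} (\varepsilon_{n N} = - 1) = \theta + O (\delta), \]
and symmetrically for the second coordinate. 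Recalling $\delta = \theta / N = \theta \varepsilon$, the discrete one-step covariance is therefore $\theta \, \mathrm{Id} + \theta \, O (\varepsilon)$ with vanishing off-diagonal part.

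On the continuous side, the increment of the free Brownian motion over a time-step $\theta$ has covariance exactly $\theta \, \mathrm{Id}$ and vanishing mixed second moment, so it remains to control the discrepancy introduced by the exit-stopping at $\tau$. This is the main obstacle. The difference between the stopped and free second moments is bounded, via Cauchy--Schwarz, by $\theta$ times a power of the probability that a Brownian motion started in $(1 - \varepsilon) \mathbb{D}$ leaves $\mathbb{D}$ within time $\theta$; since the starting point lies at distance at least $\varepsilon$ from $\partial \mathbb{D}$ while the typical displacement is $\sqrt{\theta} = \sqrt{T} \, \varepsilon^2 \ll \varepsilon$, a Gaussian tail (reflection-principle) estimate bounds this exit probability by $\exp (- c \varepsilon^2 / \theta) = \exp (- c / (T \varepsilon^2))$, which tends to $0$ as $\varepsilon \to 0$. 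Hence the continuous one-step covariance is $\theta \, \mathrm{Id} + \theta \, c (\varepsilon)$, and subtracting the two covariances gives the required bound $\theta \, c (\varepsilon)$ for every second-order multiindex, completing the verification. The only genuinely delicate point is this last exit-probability estimate, where one must keep the displacement scale $\sqrt{\theta}$ strictly below the boundary-layer width $\varepsilon$; everything else is an algebraic consequence of the independence and the one-coordinate-at-a-time structure of the increments.
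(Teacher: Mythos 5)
Your proposal is correct and takes essentially the same route as the paper, which proves this lemma in one line by reducing it to the Discrete Moments and Continuous Moments lemmas: your discrete computation $(N-1)\delta + 2\delta\,\tmmathbf{1}(\varepsilon_{nN}=-1) = \theta(1+c(\varepsilon))$ with exactly vanishing mixed moments, and your continuous-side control of the stopping discrepancy via the exit probability, small because $\sqrt{\theta} = \sqrt{T}\,\varepsilon^{2} \ll \varepsilon$, reproduce the paper's proofs of those two lemmas almost verbatim, the only (cosmetic) difference being your reflection-principle tail bound plus Cauchy--Schwarz where the paper conditions on $\{t+\theta \leqslant \tau\}$ inside the It\^{o} identity. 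One caution: your claim that after stopping ``both processes are frozen'' is literally true only for the discrete side (the comparison process started in the band $\mathbb{D}\setminus(1-\varepsilon)\mathbb{D}$ keeps diffusing until it exits $\mathbb{D}$), but this matches the paper's own ``trivial after stopping'' dismissal and is harmless since the consistency bounds are only ever invoked for $n < n_{\varepsilon}$.
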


\begin{proof}
  The weak consistency is a straightforward consequence of the next two
  moments lemmas.
\end{proof}

\begin{lemma}[Discrete Moments]
  \label{L: discrete moments}Let $X^{\tau_{\varepsilon}}$ as above. There
  exists a function $c \assign c (\varepsilon)$ tending to zero when
  $\varepsilon$ goes to zero, such that for all $n < n_{\varepsilon}$, all
  $2$--multiindex $\alpha \assign (\alpha_1, \alpha_2)$ with $| \alpha | = 2$,
  there holds
  \[ \mathbb{E} \left( \mathd X^{\tau_{\varepsilon}}_{n + 1} |
     {\widetilde{\mathcal{F}}_n}  \right) = 0 \]
  \[ \mathbb{E} ((\mathd X^{\tau_{\varepsilon}}_{n + 1})^{\alpha} |
     \widetilde{\mathcal{F}}_n) =\tmmathbf{1} (\alpha_1 \neq \alpha_2) \theta
     (1 + c (\varepsilon)), \]
  Moreover for all $p \geqslant 2$, all $i = 1, 2$, there holds
  \[ \mathbb{E} (| \mathd X_{n + 1}^{\tau_{\varepsilon}, i} |^p |
     \widetilde{\mathcal{F}}_n) \lesssim \theta^{p / 2} . \]
\end{lemma}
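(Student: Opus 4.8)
The plan is to prove all three identities for the \emph{free} increment $\mathd X_{n + 1}$ and only afterwards transfer them to the stopped increment. The key preliminary remark is that the event $\{ n < n_{\varepsilon} \}$ is $\widetilde{\mathcal{F}}_n$--measurable and that on this event $\mathd X^{\tau_{\varepsilon}}_{n + 1} = \mathd X_{n + 1}$; conditioning on $\widetilde{\mathcal{F}}_n$ therefore lets me replace the stopped increment by the genuine one throughout. I would then record the two structural facts that drive every computation. First, under Lebesgue measure the $(\varepsilon_k)_{k \geqslant 0}$ form an i.i.d. sequence of symmetric $\pm 1$ variables, with $\mathcal{F}_k = \sigma (\varepsilon_0, \ldots, \varepsilon_k)$ and $\widetilde{\mathcal{F}}_n = \mathcal{F}_{n N}$. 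Second, writing $j \assign n N + l$, the increments $\mathd B^1_j = \tmmathbf{1} (\varepsilon_{j - 1} = - 1) \varepsilon_j \sqrt{2 \delta}$ and $\mathd B^2_j = \tmmathbf{1} (\varepsilon_{j - 1} = + 1) \varepsilon_j \sqrt{2 \delta}$ satisfy $\mathbb{E} (\mathd B^i_j \mid \mathcal{F}_{j - 1}) = 0$, the exclusion property $\mathd B^1_j \, \mathd B^2_j = 0$, and $(\mathd B^i_j)^2 \in \{ 0, 2 \delta \}$.

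For the first moment I would write $\mathd X_{n + 1} = \sum_{l = 1}^N \mathd B_{n N + l}$ and apply the tower property: each $\mathd B_{n N + l}$ has vanishing conditional mean given $\mathcal{F}_{n N + l - 1} \supseteq \widetilde{\mathcal{F}}_n$, so the whole sum has conditional mean zero. For the second moments I would expand $(\mathd X^i_{n + 1}) (\mathd X^{i'}_{n + 1}) = \sum_{l, l'} \mathd B^i_{n N + l} \, \mathd B^{i'}_{n N + l'}$. Every off--diagonal pair $l \neq l'$ vanishes, since conditioning on the filtration just before the larger index kills that factor by the martingale property. When $i \neq i'$ (the case $\alpha = (1, 1)$, i.e. $\alpha_1 = \alpha_2$) the diagonal terms vanish as well because $\mathd B^1_j \, \mathd B^2_j = 0$; this is exactly the source of the factor $\tmmathbf{1} (\alpha_1 \neq \alpha_2)$. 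For the surviving pure squares $i = i'$ I am left with $\sum_{l = 1}^N \mathbb{E} ((\mathd B^i_{n N + l})^2 \mid \widetilde{\mathcal{F}}_n)$, and here the sole subtlety appears: the term $l = 1$ has its coordinate direction fixed by $\varepsilon_{n N}$, which is $\widetilde{\mathcal{F}}_n$--measurable, so (taking $i = 1$) it contributes $2 \delta \, \tmmathbf{1} (\varepsilon_{n N} = - 1)$, whereas each term $l \geqslant 2$ contributes $2 \delta \cdot \tfrac{1}{2} = \delta$ because $\varepsilon_{n N + l - 1}$ is independent of $\widetilde{\mathcal{F}}_n$. Summing gives $(N - 1) \delta + 2 \delta \, \tmmathbf{1} (\varepsilon_{n N} = - 1) = \theta (1 + \varepsilon (2 \, \tmmathbf{1} (\varepsilon_{n N} = - 1) - 1))$ with $| c (\varepsilon) | \leqslant \varepsilon = 1 / N$, using $\delta = \theta / N$; the case $i = 2$ is symmetric. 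This is precisely the claimed identity.

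For the moment bound of order $p \geqslant 2$ I would view $(M_m \assign \sum_{l = 1}^m \mathd B^i_{n N + l})_{0 \leqslant m \leqslant N}$ as a martingale adapted to $(\mathcal{F}_{n N + m})_m$ and apply the conditional Burkholder--Davis--Gundy inequality (equivalently Marcinkiewicz--Zygmund), $\mathbb{E} (| M_N |^p \mid \widetilde{\mathcal{F}}_n) \lesssim_p \mathbb{E} ([M]_N^{p / 2} \mid \widetilde{\mathcal{F}}_n)$, together with the deterministic bound $[M]_N = \sum_{l = 1}^N (\mathd B^i_{n N + l})^2 \leqslant 2 N \delta = 2 \theta$. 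This immediately yields $\mathbb{E} (| \mathd X^i_{n + 1} |^p \mid \widetilde{\mathcal{F}}_n) \lesssim \theta^{p / 2}$.

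The main obstacle is conceptual rather than computational: it is the ``memory'' of the non--Markovian walk, namely that the first step of each block of length $N$ has its coordinate direction dictated by the terminal digit $\varepsilon_{n N}$ of the previous block. This is exactly what breaks the perfect isotropy $\theta$ and produces the $O (\varepsilon)$ deviation $c (\varepsilon)$; the only delicate point is to keep track of which coordinate carries the bias and to verify that it is of the right order $\delta / \theta = \varepsilon$. By contrast, handling the stopping time is painless once one observes the $\widetilde{\mathcal{F}}_n$--measurability of $\{ n < n_{\varepsilon} \}$ and the identity of stopped and free increments before $\tau_{\varepsilon}$.
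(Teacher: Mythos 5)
Your proof is correct and, for the first two statements, follows the paper's argument almost verbatim: the same reduction to free increments before stopping (the paper dispatches this with one sentence, your observation that $\{ n < n_{\varepsilon} \}$ is $\widetilde{\mathcal{F}}_n$--measurable is the careful version of it), the same martingale cancellation of off--diagonal terms, the same pointwise exclusion $\mathd B^1_j \, \mathd B^2_j = 0$ for the case $\alpha = (1,1)$, and the same isolation of the biased first step of each block, giving $\tmmathbf{1} (\varepsilon_{n N} = - 1) \, 2 \delta + (N - 1) \delta = \theta (1 + c (\varepsilon))$ with $| c (\varepsilon) | \leqslant 1 / N = \varepsilon$ --- you even make the size of $c(\varepsilon)$ explicit, which the paper leaves implicit. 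The only genuine divergence is the $p$--th moment bound: you invoke a conditional Burkholder--Davis--Gundy (or Marcinkiewicz--Zygmund) inequality together with the deterministic bound $[M]_N = \sum_{l = 1}^N (\mathd B^i_{n N + l})^2 \leqslant 2 N \delta = 2 \theta$, whereas the paper stays elementary: it expands the even integer moments $\mathbb{E} (| \mathd X^1_{n + 1} |^{2 p} \mid \widetilde{\mathcal{F}}_n)$ combinatorially, observes that only pairings survive (at most $\frac{(2p)!}{2}$ of them per index choice), obtains $\lesssim N^p \delta^p \lesssim \theta^p$, and then interpolates to general $p \geqslant 2$ by H\"older. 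Your route is shorter and handles all $p \geqslant 2$ in one stroke at the cost of citing a conditional form of BDG (which does hold here, e.g.\ by applying BDG under the regular conditional law given $\mathcal{F}_{n N}$, since $\varepsilon_{n N + 1}, \varepsilon_{n N + 2}, \ldots$ are independent of $\widetilde{\mathcal{F}}_n$); the paper's expansion is self--contained and uses nothing beyond the martingale property. Both are complete proofs.
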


Notice that $\tmmathbf{1} (\alpha_1 \neq \alpha_2) = 1$ if $\alpha \in \{ (2,
0), (0, 2) \}$ and $\tmmathbf{1} (\alpha_1 \neq \alpha_2) = 0$ if $\alpha =
(1, 1)$.

\begin{lemma}[Continuous moments]
  \label{L: continuous moments} Let $W^{\tau}$ and $W^{\tau, t, x}$ as above.
  There exists a function $c \assign c_T (\varepsilon)$ tending to zero when
  $\varepsilon$ goes to zero for all fixed $T$, such that for all
  $2$--multiindex $\alpha \assign (\alpha_1, \alpha_2)$ with $| \alpha | = 2$,
  there holds
  \[ \forall t \geqslant 0, \forall x \in \mathbb{D}, \quad \mathbb{E} (W_{t +
     \theta}^{\tau, t, x} - W_t^{\tau, t, x}) = 0, \]
  \[ \forall t \geqslant 0, \forall x \in (1 - \varepsilon) \mathbb{D}, \quad
     \mathbb{E} (W_{t + \theta}^{\tau, t, x} - W_t^{\tau, t, x})^{\alpha}
     =\tmmathbf{1} (\alpha_1 \neq \alpha_2) \theta (1 + c_T (\varepsilon)) .
  \]
  \[ \forall t \geqslant 0, \forall x \in \mathbb{D}, \forall p \geqslant 2,
     \quad \mathbb{E} (| W_{t + \theta}^{\tau, t, x} - W_t^{\tau, t, x} |^p)
     \lesssim \theta^{p / 2} . \]
\end{lemma}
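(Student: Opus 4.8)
The plan is to compute the conditional moments of the Brownian increment $W_{t+\theta}^{\tau,t,x}-W_t^{\tau,t,x}$ over one coarse time-step $\theta$, distinguishing the case where the starting point $x$ is comfortably in the interior from the case where it is near the boundary. The first identity, $\mathbb{E}(W_{t+\theta}^{\tau,t,x}-W_t^{\tau,t,x})=0$, holds for every $x\in\mathbb{D}$ simply because the stopped Brownian motion $W^{\tau}$ is a martingale, so the conditional expectation of any increment vanishes regardless of stopping. First I would record this and dispense with it immediately.

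For the second-moment identities I would split according to whether the process has exited before time $t+\theta$. When $x\in(1-\varepsilon)\mathbb{D}$, the probability that an unstopped planar Brownian motion travels a distance of order $\varepsilon$ within time $\theta=N\delta$ is, by standard Gaussian tail estimates, exponentially small in $\varepsilon^2/\theta$; with $\varepsilon=1/N$ and $\theta=N\delta=N^{-3}T$ one checks $\varepsilon^2/\theta\sim N/T\to\infty$, so $\mathbb{P}(\tau\le t+\theta\mid W_t=x)=c_T(\varepsilon)$. On the complementary event the increment equals the free Brownian increment $W_{t+\theta}-W_t$, whose moments are exactly those of a centered Gaussian: $\mathbb{E}((W_{t+\theta}-W_t)^{(2,0)})=\mathbb{E}((W_{t+\theta}-W_t)^{(0,2)})=\theta$ and $\mathbb{E}((W_{t+\theta}-W_t)^{(1,1)})=0$ by independence of the two coordinates. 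So I would write
\[
\mathbb{E}\bigl((W_{t+\theta}^{\tau,t,x}-W_t^{\tau,t,x})^{\alpha}\bigr)
=\mathbb{E}\bigl((W_{t+\theta}-W_t)^{\alpha}\tmmathbf{1}(\tau>t+\theta)\bigr)
+\mathbb{E}\bigl((W_{t+\theta}^{\tau}-W_t^{\tau})^{\alpha}\tmmathbf{1}(\tau\le t+\theta)\bigr),
\]
bound the second term by $\theta$ times the stopping probability times a constant using the uniform-integrability estimate below (Cauchy--Schwarz plus the $p$th-moment bound controls the contribution of the small-probability event), and identify the first term with the Gaussian value up to the same error. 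This yields $\tmmathbf{1}(\alpha_1\ne\alpha_2)\theta(1+c_T(\varepsilon))$.

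The third estimate, $\mathbb{E}(|W_{t+\theta}^{\tau,t,x}-W_t^{\tau,t,x}|^p)\lesssim\theta^{p/2}$, is where I would invoke the Burkholder--Davis--Gundy inequality: since $W^{\tau}$ is a continuous martingale with quadratic variation bounded by that of $W$, one has $\mathbb{E}(\sup_{s\le\theta}|W_{t+s}^{\tau}-W_t^{\tau}|^p)\lesssim_p\mathbb{E}(\langle W^{\tau}\rangle_{\theta}^{p/2})\le\theta^{p/2}$, with a constant depending only on $p$. This also supplies the moment bound needed to control the boundary event in the previous paragraph. The main obstacle I anticipate is making the exit-probability estimate genuinely uniform in $x\in(1-\varepsilon)\mathbb{D}$ and in $t$, and tracking that the error is of the claimed form $c_T(\varepsilon)$ (vanishing as $\varepsilon\to0$ for fixed $T$) rather than merely $o(1)$: this requires the reflection-principle tail bound for the maximal displacement of planar Brownian motion together with the precise scaling relation between $\varepsilon$, $\theta$, and $N$ fixed earlier. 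Once that uniform control is in hand, the Gaussian computation and the BDG bound are routine.
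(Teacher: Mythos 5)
Your proposal is correct and takes essentially the same route as the paper: vanishing of the first moment by the martingale property, a split on the exit event whose probability is $c_T(\varepsilon)$ because the diffusion scale $\sigma_\theta=\sqrt{\theta}$ is small compared to $\varepsilon$, exact Gaussian second moments on the complementary event, and standard higher-moment bounds (the paper invokes known Brownian moments where you use BDG, which is the same estimate). One minor arithmetic slip: $\theta=N\delta=TN^{-4}$, so $\varepsilon^2/\theta=N^2/T$ rather than $N/T$, which only improves your tail bound and changes nothing in the conclusion.
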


\begin{proof*}{Proof of Lemma \ref{L: discrete moments}}
  {\dueto{Discrete Moments}}For simplicity, we omit the superfix
  $\tau_{\varepsilon}$ in the discrete stopped processes
  $X^{\tau_{\varepsilon}}$ and $B^{\tau_{\varepsilon}}$. Recall that
  $(\mathcal{F}_k)_{k \in [0, N^5]}$ is the filtration associated to the
  discrete random walk $(B_k)_{k \in [0, N^5]}$, and
  $(\widetilde{\mathcal{F}}_n)_{n \in [0, N^4]}$, with
  $\widetilde{\mathcal{F}}_n \assign \mathcal{F}_{n N}$, the filtration
  associated to the coarse random walk $(X_n)_{n \in [0, N^4]}$. Recall
  finally that $X_n \assign B_{n N}$, and therefore $\mathd X_{n + 1} \assign
  X_{n + 1} - X_n = \sum_{l = 1}^N \mathd B_{n N + l}$. We are only interested
  in increments occuring before stopping, otherwise the estimate is trivial,
  all increments being zero after stopping. We have for all $n \leqslant N^4$,
  \[ \mathbb{E} (\mathd X_{n + 1} | \widetilde{\mathcal{F}}_n) = \sum_{l =
     1}^N \mathbb{E} (\mathd B_{n N + l} | \mathcal{F}_{n N}) = 0 \]
  since $B$ is a martingale. Now for the second order moments, let $\alpha =
  (\alpha_1, \alpha_2)$ a multiindex with $| \alpha | = 2$. We estimate for
  example the variance of the first coordinate
  \begin{eqnarray*}
    \mathbb{E} ((\mathd X^1_{n + 1})^2 | \widetilde{\mathcal{F}}_n) & \assign
    & \mathbb{E} \left( \sum_{l, l' = 1}^N \mathd B_{n N + l}^1 \mathd B_{n N
    + l'}^1 | \mathcal{F}_{n N} \right) = \sum_{l = 1}^N \mathbb{E} ((\mathd
    B_{n N + l}^1)^2 | \mathcal{F}_{n N})
  \end{eqnarray*}
  where we used that $B$ is a martingale. We have in the sum above for $l =
  1$,
  \begin{eqnarray*}
    \mathbb{E} ((\mathd B_{n N + 1}^1)^2 | \mathcal{F}_{n N}) & = & \mathbb{E}
    \left( \tmmathbf{1} (\varepsilon_{n N} = - 1)^2 \varepsilon_{n N + 1}^2 
    \left( \sqrt{2 \delta} \right)^2 | \mathcal{F}_{n N} \right) =\tmmathbf{1}
    (\varepsilon_{n N} = - 1) 2 \delta .
  \end{eqnarray*}
  For the other summands, with $l \geqslant 2$, we have
  \begin{eqnarray*}
    \mathbb{E} ((\mathd B_{n N + l}^1)^2 | \mathcal{F}_{n N}) & = & \mathbb{E}
    \left( \tmmathbf{1} (\varepsilon_{n N + l - 1} = - 1)^2 \varepsilon_{n N +
    l}^2  \left( \sqrt{2 \delta} \right)^2 | \mathcal{F}_{n N} \right)\\
    & = & 2 \delta \quad \mathbb{E} (\tmmathbf{1} (\varepsilon_{n N + l - 1}
    = - 1)  | \mathcal{F}_{n N}) = 2 \delta \left[ \frac{1}{2} \cdot 0 +
    \frac{1}{2} \cdot 1 \right] = \delta
  \end{eqnarray*}
  Since $\theta = N \delta$, we have, for $\alpha = (2, 0)$,
  \begin{eqnarray*}
    \mathbb{E} ((\mathd X^1_{n + 1})^2 | \widetilde{\mathcal{F}}_n) & = &
    [\tmmathbf{1} (\varepsilon_{n N} = - 1) 2 \delta + (N - 1) \delta] =
    \theta (1 + c (\varepsilon)),
  \end{eqnarray*}
  and the same estimate holds for the second coordinate. Finally since $\mathd
  B^1_n (x) \mathd B^2 (x) = 0$ for all $n$, all $x$, it follows $\mathbb{E}
  (\mathd X^1_{n + 1} \mathd X^2_{n + 1} | \widetilde{\mathcal{F}}_n) = 0$ for
  all $n$.
  
  For the higher order moments, if $| \alpha | = p$, $\mathbb{E} (\mathd X_{n
  + 1}^{\alpha} | \widetilde{\mathcal{F}}_n) \lesssim \mathbb{E} (| \mathd
  X_{n + 1}^1 |^p | \widetilde{\mathcal{F}}_n) +\mathbb{E} (| \mathd X_{n +
  1}^2 |^p | \widetilde{\mathcal{F}}_n)$. For integer moments, estimate for
  example
  
  \begin{align}
    \mathbb{E} (| \mathd X_{n + 1}^1 |^{2 p} | \widetilde{\mathcal{F}}_n) & =
    \sum_{l_1, \ldots, l_{2 p}} \mathbb{E} (\mathd B^1_{n N + l_1} \ldots
    \mathd B_{n N + l_{2 p}}^1 | \widetilde{\mathcal{F}}_n) \nonumber\\
    & \leqslant \frac{(2 p) !}{2} \sum_{l_1, l_3, \ldots l_{2 p - 1}}
    \mathbb{E} ((\mathd B^1_{n N + l_1})^2 (\mathd B^1_{n N + l_3})^2 \ldots
    (\mathd B_{n N + l_{2 p - 1}}^1)^2 | \widetilde{\mathcal{F}}_n)
    \nonumber\\
    & \lesssim N^p \delta^p \lesssim \theta^p . \nonumber
  \end{align}
  
  By H{\"o}lder, we get for any $p \geqslant 2$, $\mathbb{E} (| \mathd X_{n +
  1}^1 |^p | \widetilde{\mathcal{F}}_n) \lesssim \theta^{p / 2}$.
\end{proof*}

\begin{proof*}{Proof of Lemma \ref{L: continuous moments}}
  {\dueto{Continuous Moments}}The first statement is obvious. For the second
  statement, notice that $x \in (1 - \varepsilon) \mathbb{D}$ implies
  $\tmop{dist} (x, \partial \mathbb{D}) \geqslant \varepsilon$. We now take
  advantage of the fact that the standard mean deviation of the non--stopped
  Brownian motion $W_{t + \theta}^{t, x} - W_t^{t, x}$ over one time step
  $\theta$  is $\sigma_{\theta} = \sqrt{\theta} = \sqrt{T / N^4} = \varepsilon
  \sqrt{T} / N$. Therefore $\sigma_{\theta} \ll \varepsilon$, i.e.
  $\sigma_{\theta} = c_T (\varepsilon) \varepsilon .$ This means that for $x
  \in (1 - \varepsilon) \mathbb{D}$, we have $\tmop{dist} (x, \partial
  \mathbb{D}) \geqslant \varepsilon$.In other words the point $x$ is ``very
  far away'' from the boundary as compared to the distance the Brownian motion
  can diffuse. More precisely, as a consequence of standard estimates of first
  hitting times of the Brownian motion, we have for such a $x$ and the
  corresponding stopped process $W_t^{\tau, t, x}$, that
  \[ \mathbb{P} (t + \theta > \tau) = c (\sigma_{\theta} / \varepsilon) = c
     (c_T (\varepsilon)) = c_T (\varepsilon) . \]
  Note now $(W^{\tau, t, x, i}_s)_{i = 1, 2}$ the two components of the
  Brownian motion. Consider for example the second moment of the first
  coordinate. Using It{\^o} formula and taking expectation yields
  
  \begin{align}
    \mathbb{E} (W_{t + \theta}^{\tau, t, x, 1} - W_t^{\tau, t, x, 1})^2 & =
    \frac{1}{2} \mathbb{E} \int_t^{(t + \theta) \wedge \tau} \mathd [W^{\tau,
    t, x, 1}, W^{\tau, t, x, 1}]_s \nonumber\\
    & = \frac{1}{2} \mathbb{E} \left( \int_t^{(t + \theta)} \mathd [W^{\tau,
    t, x, 1}, W^{\tau, t, x, 1}]_s  | t + \theta \leqslant \tau \right)
    \mathbb{P} (t + \theta \leqslant \tau) \nonumber\\
    & + \frac{1}{2} \mathbb{E} \left( \int_t^{(t + \theta)} \mathd [W^{\tau,
    t, x, 1}, W^{\tau, t, x, 1}]_s  | t + \theta > \tau \right) \mathbb{P} (t
    + \theta > \tau) \nonumber\\
    & \backassign \theta \delta_{\alpha_1 \alpha_2} \mathbb{P} (t + \theta
    \leqslant \tau) + A (t, x, \theta) \mathbb{P} (t + \theta > \tau),
    \nonumber
  \end{align}
  
  where clearly $0 \leqslant A (t, x, \theta) \leqslant \theta$ for all $(t,
  x)$. This yields the second statement since $\mathbb{P} (t + \theta > \tau)
  = c_T (\varepsilon)$ and $\mathbb{P} (t + \theta \leqslant \tau) = 1 - c_T
  (\varepsilon)$ in the case $\alpha = (2, 0)$. The case $\alpha = (0, 2)$ is
  similar. The case $\alpha = (1, 1)$ is trivial since $\mathd [W^{\tau, t, x,
  1}, W^{\tau, t, x, 2}]_s = 0$. The third statement of the Lemma follows the
  same lines using the known higher moments for the Brownian motion.
\end{proof*}

\section{Auxiliary convergence results}\label{S: auxiliary convergence
results}

Our goal is to prove the following two convergence results:

\begin{lemma}
  \label{L: weak convergence} (Weak convergence) Let $T > 0$. Let $\psi$
  harmonic on $\mathbb{D}$ with $\psi$ smooth on $\partial \mathbb{D}$. Assume
  weak consistency. Then we have
  \[ \mathbb{E} \psi (X_T^{\tau_{\varepsilon}}) =\mathbb{E} \psi (W_T^{\tau})
     + c_{\psi, T} (\varepsilon) + c (T) \]
\end{lemma}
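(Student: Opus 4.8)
The plan is to run the classical weak-error telescoping between the discrete scheme $X^{\tau_{\varepsilon}}$ and the continuous stopped process $W^{\tau}$, using harmonicity of $\psi$ to pin down the exact continuous target and weak consistency to control the one-step discrepancy. First I would exploit that, $\psi$ being harmonic and bounded on $\overline{\mathbb{D}}$, the process $s \mapsto \psi (W^{\tau}_s)$ is a bounded martingale; optional stopping then gives the \emph{exact} identity $\mathbb{E} \psi (W^{\tau}_T) = \psi (W_0) = \psi (0, 0)$, and likewise, for the one-step comparison process started at any $x \in \mathbb{D}$, the martingale property yields $\mathbb{E} (\psi (W^{\tau, t_n, x}_{t_{n + 1}}) \mid \widetilde{\mathcal{F}}_n) = \psi (x)$. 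Since $X_0 = (0, 0)$, it therefore suffices to establish $\mathbb{E} \psi (X^{\tau_{\varepsilon}}_T) = \psi (0, 0) + c_{\psi, T} (\varepsilon) + c (T)$.

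Next I would telescope along the coarse filtration. Writing the terminal index as $N^4$ (so that $t_{N^4} = T$) and using that all increments vanish after the random stopping index $n_{\varepsilon}$, one gets
\[
  \mathbb{E} \psi (X^{\tau_{\varepsilon}}_T) - \psi (X_0) = \sum_{n = 0}^{N^4 - 1} \mathbb{E} \left[ \tmmathbf{1} (n < n_{\varepsilon})\, \mathbb{E} \bigl( \psi (X^{\tau_{\varepsilon}}_{n + 1}) - \psi (X^{\tau_{\varepsilon}}_n) \mid \widetilde{\mathcal{F}}_n \bigr) \right].
\]
On $\{ n < n_{\varepsilon} \}$ we have $x \assign X^{\tau_{\varepsilon}}_n \in (1 - \varepsilon) \mathbb{D}$ and $| \mathd X_{n + 1} | \leqslant \varepsilon$, so the conditional one-step change decomposes, via the continuous comparison step $\tilde{X}_{n + 1} \assign W^{\tau, t_n, x}_{t_{n + 1}}$, as
\[
  \mathbb{E} \bigl( \psi (X^{\tau_{\varepsilon}}_{n + 1}) - \psi (\tilde{X}_{n + 1}) \mid \widetilde{\mathcal{F}}_n \bigr) + \mathbb{E} \bigl( \psi (\tilde{X}_{n + 1}) - \psi (x) \mid \widetilde{\mathcal{F}}_n \bigr),
\]
where the second bracket is exactly $0$ by the continuous martingale property above. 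For the first bracket I would Taylor expand $\psi$ to second order at $x$: the constant terms cancel, the first- and second-order terms are paired against the \emph{differences} of the first and second conditional moments of $\mathd X_{n+1}$ and of the continuous increment, each of which is $\leqslant \theta c (\varepsilon)$ by weak consistency (Lemma \ref{L: weak consistency}), while the third-order remainder is bounded by $\sup_{\overline{\mathbb{D}}} | \mathD^3 \psi |$ times the third absolute moments, which are $\lesssim \theta^{3 / 2}$ for both processes by Lemmas \ref{L: discrete moments} and \ref{L: continuous moments}. Hence each summand is $O (\theta c (\varepsilon) + \theta^{3 / 2})$.

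Finally I would sum the at most $N^4$ nonzero contributions and use the scalings $N^4 \theta = T$ and $\theta = T \varepsilon^4$:
\[
  \bigl| \mathbb{E} \psi (X^{\tau_{\varepsilon}}_T) - \psi (0, 0) \bigr| \lesssim N^4 \bigl( \theta c (\varepsilon) + \theta^{3 / 2} \bigr) = T\, c (\varepsilon) + T^{3 / 2} \varepsilon^2,
\]
both terms vanishing as $\varepsilon \rightarrow 0$ for fixed $T$ and thus absorbed into $c_{\psi, T} (\varepsilon)$; combined with $\mathbb{E} \psi (W^{\tau}_T) = \psi (0, 0)$ this gives the claim, the residual $c (T)$ absorbing the finite-horizon effect (the probability that neither process has exited by time $T$). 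The main obstacle is the bookkeeping of the one-step estimate: the naive second-order term is of size $\theta$ per step and would accumulate to $O (N^4 \theta) = O (T)$, so one must use harmonicity (equivalently, the exact continuous one-step identity) to annihilate the leading Laplacian contribution and rely on weak consistency to show that only the lower-order $\theta c (\varepsilon)$ and $\theta^{3 / 2}$ defects survive; ensuring these bounds hold uniformly over the interior region $(1 - \varepsilon) \mathbb{D}$ and pass cleanly through the random stopping index $n_{\varepsilon}$ is the delicate point.
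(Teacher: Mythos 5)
Your proposal is correct and follows essentially the same route as the paper's proof: telescoping the discrete increments against the one-step stopped Brownian comparison $W^{\tau, t_n, x}$, using harmonicity (optional stopping) to make the continuous one-step means vanish exactly, pairing the first- and second-order Taylor terms against the weak-consistency defects of size $\theta c (\varepsilon)$, bounding the remainders by the third moments $\lesssim \theta^{3 / 2}$, and summing via $N^4 \theta = T$. The only (harmless) deviation is that your indicator-based telescoping over $\{ n < n_{\varepsilon} \}$ sidesteps the paper's explicit conditioning on the event $\{ \tau_{\varepsilon} > T \}$ (which the paper controls via the rotated walk $\tilde{B}$ and discrete hitting-time estimates), so your version does not even need the $c (T)$ slack permitted in the statement.
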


\begin{lemma}
  \label{L: convergence discrete martingale transforms}(Weak convergence of
  discrete martingale transforms) Let $T > 0$. Let $f$ as above.
  \[ \| f (X_T^{\tau_{\varepsilon}}) - M_T^f \|_p = c_T (\varepsilon) \]
\end{lemma}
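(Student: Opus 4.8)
\emph{The plan} is to compare the two quantities through a second--order discrete It\^o expansion along the \emph{fine} random walk $B^{\tau_\varepsilon}$, on which both $X^{\tau_\varepsilon}$ and $M^f$ are built. Since $\mathd B_l = 0$ once the walk is stopped, both sums effectively run up to the fine stopping index $k_\varepsilon \assign N n_\varepsilon \leqslant N^5$, and telescoping followed by a Taylor expansion of each increment to second order gives
\[ f(X_T^{\tau_\varepsilon}) - f(0,0) = \sum_{l=1}^{k_\varepsilon} \nabla f(B_{l-1}) \cdot \mathd B_l + \tfrac12 \sum_{l=1}^{k_\varepsilon} \mathD^2 f(B_{l-1})[\mathd B_l, \mathd B_l] + \sum_{l=1}^{k_\varepsilon} R_l, \]
where $R_l$ is the third--order remainder. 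The first sum is exactly $M_T^f - f(0,0)$, so I am left to estimate the $L^p$ norm of $Q \assign \tfrac12 \sum_l \mathD^2 f(B_{l-1})[\mathd B_l, \mathd B_l]$ and of $\sum_l R_l$.

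For the quadratic term I would exploit the rigid structure of $B$. Only one coordinate moves per fine step, so $\mathd B_l^1 \mathd B_l^2 = 0$, while $(\mathd B_l^1)^2 = \mathbf 1(\varepsilon_{l-1} = -1)\, 2\delta$ and $(\mathd B_l^2)^2 = \mathbf 1(\varepsilon_{l-1} = +1)\, 2\delta$. Thus the $l$--th summand of $Q$ equals $\delta\big[ \partial_{11} f\, \mathbf 1(\varepsilon_{l-1}=-1) + \partial_{22} f\, \mathbf 1(\varepsilon_{l-1}=+1)\big](B_{l-1})$. Here harmonicity enters decisively: subtracting $\tfrac\delta2 \Delta f(B_{l-1}) = 0$ and using $\mathbf 1(\varepsilon_{l-1}=\mp 1) - \tfrac12 = \mp\tfrac12\varepsilon_{l-1}$ rewrites this summand as $c_l \assign \tfrac\delta2\, \varepsilon_{l-1}\, \phi(B_{l-1})$ with $\phi \assign \partial_{22} f - \partial_{11} f$. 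This is the heart of the argument: the naive quadratic term has size $N^5\delta = T$ and does \emph{not} vanish, but harmonicity cancels its predictable part and leaves the fluctuating factor $\varepsilon_{l-1}$, turning $Q$ into an (almost) martingale.

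It remains to show $\| \sum_{l=1}^{k_\varepsilon} c_l \|_p = c_T(\varepsilon)$. With the fine filtration $\mathcal F_l = \sigma(\varepsilon_0, \dots, \varepsilon_l)$, the factor $\varepsilon_{l-1}$ is $\mathcal F_{l-1}$--measurable and centered given $\mathcal F_{l-2}$, whereas $B_{l-1}$ also depends on $\varepsilon_{l-1}$. I would therefore write $\phi(B_{l-1}) = \phi(B_{l-2}) + [\phi(B_{l-1}) - \phi(B_{l-2})]$. The first piece turns $d_l \assign \tfrac\delta2 \varepsilon_{l-1} \phi(B_{l-2})$ into a martingale difference sequence, because $\phi(B_{l-2})$ is $\mathcal F_{l-2}$--measurable and $\mathbb E(\varepsilon_{l-1} \mid \mathcal F_{l-2}) = 0$; the second piece is bounded by $\delta\,|\mathd B_{l-1}| \lesssim \delta^{3/2}$. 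The stopped sum $\sum_{l \leqslant k_\varepsilon} d_l$ is again a martingale, and the Burkholder inequality for $X$--valued martingales, available since $X$ is UMD, bounds its $L^p$ norm by the associated (randomized) square function; since $|d_l| \lesssim \delta$ and there are at most $N^5$ terms, this is $\lesssim N^{\beta}\delta = T N^{\beta - 5}$ for some $\beta < 5$ governed by the type of $X$ (with $\beta = 5/2$ in the Hilbert case), hence $c_T(\varepsilon)$.

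The lower--order contributions require no cancellation. Summing $N^5$ terms of the Taylor correction together with the third--order remainder $|R_l| \lesssim |\mathd B_l|^3 \lesssim \delta^{3/2}$ (using boundedness of the third Fr\'echet derivative of $f$ on the closed disc) gives, by the triangle inequality, a bound $\lesssim N^5 \delta^{3/2} = T^{3/2} N^{-5/2} = c_T(\varepsilon)$. Collecting the three estimates and recalling $\varepsilon = 1/N$ yields the claim. The main obstacle is the passage recorded in the third paragraph: the discrete quadratic variation is \emph{predictable and anisotropic}, so convergence is not the routine consequence of $\Delta f = 0$ that it is for the continuous It\^o formula, but must be extracted by combining harmonicity with a genuinely martingale (square--function) estimate in the Banach--valued setting.
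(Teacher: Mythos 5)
Your decomposition and the cancellation step are exactly the paper's: telescoping $f(X_T^{\tau_\varepsilon})-f(0,0)$ along the fine walk, the identity $\tmmathbf{1}(\varepsilon=\pm 1)=\frac12\pm\frac{\varepsilon}{2}$ combined with harmonicity to kill the predictable half of the quadratic term (your $c_l=\frac{\delta}{2}\varepsilon_{l-1}(\partial^2_{22}-\partial^2_{11})f(B_{l-1})$ is the paper's $\delta\,\varepsilon_{l-1}\partial^2_{22}f(B_{l-1})$), and the bound $N^5\delta^{3/2}=T\delta^{1/2}=c_T(\varepsilon)$ for the third-order remainder. The gap is in your final step. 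To beat the triangle inequality for the stopped martingale $\sum_{l\leqslant k_\varepsilon}d_l$, $d_l=\frac{\delta}{2}\varepsilon_{l-1}\phi(B_{l-2})$, you invoke Burkholder's inequality ``available since $X$ is UMD'' and a type-dependent square-function gain $N^{\beta}\delta$ with $\beta<5$. But the lemma carries no hypothesis on $X$ whatsoever --- it is a statement about an arbitrary Banach space and a fixed smooth $f$, and the paper proves it as such --- and in the paper's logic you cannot grant yourself UMD here: the theorem being proved deduces boundedness of $\mathcal{H}$ (equivalently, the UMD property) from boundedness of $\mathcal{S}$, so inserting UMD (or even nontrivial type, which at this stage is not known to follow from $\|\mathcal{S}\|_{p\rightarrow p}<\infty$) into the convergence lemma makes the overall argument circular. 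In a general Banach space the martingale $\sum_l\varepsilon_{l-1}\phi(B_{l-2})$ with $X$-valued coefficients admits no gain over the triangle inequality (think of spaces of trivial type such as $L^1$), so the estimate genuinely fails as stated.

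The repair --- and the paper's actual route --- is to place the gain where the randomness is scalar. Instead of freezing $\phi$ one fine step back, group the sum into the $N^4$ coarse blocks and freeze at the block start: write $A=\sum_{n}(B_n+C_n)$, where $B_n$ collects the errors $|\partial^2_{22}f(B_{(n-1)N+l-1})-\partial^2_{22}f(B_{(n-1)N})|\lesssim \|\mathD^3 f\|_\infty\, l\sqrt{\delta}$, giving $\|\sum_n B_n\|_p\lesssim N^6\delta^{3/2}=T^{3/2}N^{-3/2}$, and $C_n=\delta\,\partial^2_{22}f(B_{(n-1)N})\sum_{l=1}^N\varepsilon_{(n-1)N+l-1}$. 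In $C_n$ the single $X$-valued factor is pulled out in sup norm, leaving a purely scalar Rademacher sum whose $L^p$ norm is $\lesssim N^{1/2}$ --- the paper reads this off the discrete-moments lemma, since up to the $\sqrt{2\delta}$-normalization this sum is $\mathd X^1_n+\mathd X^2_n$ with $\|\mathd X^i_n\|_p\lesssim\theta^{1/2}$. Then the crude triangle inequality over the $N^4$ blocks suffices: $N^4\,\delta\,N^{1/2}=TN^{-1/2}=c_T(\varepsilon)$. No Banach-space martingale inequality is needed anywhere; the only probabilistic gain exploited is scalar Khintchine at the block scale. Your one-step freezing is sound as far as it goes, but it converts the problem into an $X$-valued martingale estimate that only UMD could deliver, which is precisely what is not available.
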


\begin{proof*}{Proof of Lemma \ref{L: weak convergence}}
  Let $\psi$ harmonic on $\mathbb{D}$, with $\psi$ smooth on $\partial
  \mathbb{D}$. We first split
  \[ \mathbb{E} \psi (X_T^{\tau_{\varepsilon}}) =\mathbb{E} (\psi
     (X_T^{\tau_{\varepsilon}}) | \tau_{\varepsilon} \leqslant T) \mathbb{P}
     (\tau_{\varepsilon} \leqslant T) +\mathbb{E} (\psi
     (X_T^{\tau_{\varepsilon}}) | \tau_{\varepsilon} > T) \mathbb{P}
     (\tau_{\varepsilon} > T) \]
  We claim that the second term is small uniformly w.r.t. $\varepsilon$ when
  $T$ is large. Indeed, by definition of $\tau_{\varepsilon}$, this term
  collects the contribution of those trajectories that remained in the disc
  $(1 - \varepsilon) \mathbb{D}$ during the whole interval of time $[0, T]$.
  We claim that this is small for $T$ large. Indeed, let $\tilde{B} =
  (\tilde{B}^1, \tilde{B}^2)$ the rotation of angle $\pi / 4$ of $B$, i.e.
  $\tilde{B}^1 \assign (B^1 + B^2) / \sqrt{2}$, and $\tilde{B}^2 \assign (B^2
  - B^1) / \sqrt{2}$. It follows
  \[ \tilde{B}^1_k (x) = \sum_{l = 1}^k \varepsilon_l (x)  \sqrt{\delta},
     \quad \tilde{B}^2_k (x) = \sum_{l = 1}^k \varepsilon_{l - 1} (x)
     \varepsilon_l (x)  \sqrt{\delta}, \]
  that is both $\tilde{B}^1$ and $\tilde{B}^2$ are (non independent) standard
  centered discrete random walks. Let $\tilde{\tau}^1$ and $\tilde{\tau}^2$
  the first exit times
  \[ \tilde{\tau}^1 \assign \inf \{ t_k ; | \tilde{B}_{t_k}^1 | \geqslant 1
     \}, \quad \tilde{\tau}^2 \assign \inf \{ t_k ; | \tilde{B}_{t_k}^2 |
     \geqslant 1 \} . \]
  But
  \begin{eqnarray*}
    \tau_{\varepsilon} > T & \Leftrightarrow & X_n \in (1 - \varepsilon)
    \mathbb{D}, \quad n \in [0, N^4]\\
    & \Rightarrow & B_{n N} \in (1 - \varepsilon) \mathbb{D}, \quad n \in [0,
    N^4]\\
    & \Rightarrow & B_k \in \mathbb{D}, \quad k \in [0, N^5]\\
    & \Rightarrow & \tilde{\tau}^1 > T \infixand \tilde{\tau}^2 > T,
  \end{eqnarray*}
  where we have used that $| B_k - B_{n N} | \leqslant N \sqrt{\delta} \ll
  \varepsilon$ for $k \in [(n - 1) N + 1, \ldots, n N]$. In particular,
  \[ \mathbb{P} (\tau_{\varepsilon} > T) \leqslant \mathbb{P} (\tilde{\tau}^1
     > T) = c (T) . \]
  The last equality is a consequence of first hitting time estimates of
  standard centered discrete random walks, see e.g. {\tmname{Lawler}}
  {\cite{Law2010a}}. The function $\psi$ being bounded on $\mathbb{D}$, we
  have also $\mathbb{E} (\psi (X_T^{\tau_{\varepsilon}}) | \tau_{\varepsilon}
  > T) \mathbb{P} (\tau_{\varepsilon} > T) = c (T)$. Similarly for the second
  term, $\mathbb{E} (\psi (X_T^{\tau_{\varepsilon}}) | \tau_{\varepsilon}
  \leqslant T) \mathbb{P} (\tau_{\varepsilon} \leqslant T) =\mathbb{E} (\psi
  (X_T^{\tau_{\varepsilon}}) | \tau_{\varepsilon} \leqslant T) + c_{\psi}
  (T)$. On the other hand, since $\psi$ is harmonic, we have immediately
  \[ \mathbb{E} \psi (W_T^{\tau}) = \psi (W_0^{\tau}) = \psi (0, 0), \]
  so that
  \[ \mathbb{E} \psi (X_T^{\tau_{\varepsilon}}) -\mathbb{E} \psi (W_T^{\tau})
     =\mathbb{E} (\psi (X_T^{\tau_{\varepsilon}}) | \tau_{\varepsilon}
     \leqslant T) - \psi (0, 0) + c (T) =\mathbb{E} (\psi
     (X_T^{\tau_{\varepsilon}}) - \psi (X_0^{\tau_{\varepsilon}}) |
     \tau_{\varepsilon} \leqslant T) + c_{\psi} (T) . \]
  Now since $\mathbb{E} (\psi (W_{t_{n + 1}}^{\tau, t_n, x})) = \psi
  (W_{t_n}^{\tau, t_n, x}) = \psi (x)$ for all $x \in \mathbb{D}$, we have
  
  \begin{align}
    \mathbb{E} (\psi (X_T^{\tau_{\varepsilon}}) - \psi
    (X_0^{\tau_{\varepsilon}})) & =\mathbb{E} \sum_{n = 1}^{n_{\varepsilon}}
    (\psi (X_{t_n}^{\tau_{\varepsilon}}) - \psi (X_{t_{n -
    1}}^{\tau_{\varepsilon}})) \nonumber\\
    & =\mathbb{E} \sum_{n = 1}^{n_{\varepsilon}} (\psi
    (X_{t_n}^{\tau_{\varepsilon}}) - \psi (X_{t_{n -
    1}}^{\tau_{\varepsilon}})) - \left( \psi \left( W_{t_n}^{\tau, t_{n - 1},
    X_{t_{n - 1}}^{\tau_{\varepsilon}}} \right) - \psi (X_{t_{n -
    1}}^{\tau_{\varepsilon}}) \right) \nonumber\\
    & =\mathbb{E} \sum_{n = 1}^{n_{\varepsilon}} \left[ D \psi (X_{t_{n -
    1}}^{\tau_{\varepsilon}}) \cdot \left\{ (X_{t_n}^{\tau_{\varepsilon}} -
    X_{t_{n - 1}}^{\tau_{\varepsilon}}) - \left( W_{t_n}^{\tau, t_{n - 1},
    X_{t_{n - 1}}^{\tau_{\varepsilon}}} - X_{t_{n - 1}}^{\tau_{\varepsilon}}
    \right) \right\} \right. \nonumber\\
    & \qquad + \frac{1}{2} \sum_{| \alpha | = 2} D^{\alpha} \psi (X_{t_{n -
    1}}^{\tau_{\varepsilon}}) \cdot \left\{ (X_{t_n}^{\tau_{\varepsilon}} -
    X_{t_{n - 1}}^{\tau_{\varepsilon}})^{\alpha} - \left( W_{t_n}^{\tau, t_{n
    - 1}, X_{t_{n - 1}}^{\tau_{\varepsilon}}} - X_{t_{n -
    1}}^{\tau_{\varepsilon}} \right)^{\alpha} \right\} \nonumber\\
    & \qquad + \left. R (X_{t_{n - 1}}^{\tau_{\varepsilon}},
    X_{t_n}^{\tau_{\varepsilon}}) - R \left( X_{t_{n -
    1}}^{\tau_{\varepsilon}}, W_{t_n}^{\tau, t_{n - 1}, X_{t_{n -
    1}}^{\tau_{\varepsilon}}} \right) \right] \nonumber\\
    & \backassign A + B + C, \nonumber
  \end{align}
  
  where $R (x, y)$ is the Taylor rest
  \[ R (x, y) \assign \frac{1}{3!} \sum_{| \alpha | = 3} D^{\alpha} \psi (x +
     \theta_{x, y} (y - x)) \cdot (y - x)^{\alpha}, \quad x, y \in \mathbb{D},
     \theta_{x, y} \in [0, 1] . \]
  Using the weak consistency of $X^{\tau_{\varepsilon}}$ with $W^{\tau}$, we
  get, recalling $T = N^4 \theta$,
  \begin{eqnarray*}
    | A | & \leqslant & \mathbb{E} \sum_{n = 1}^{n_{\varepsilon}} \left[ | D
    \psi (X_{t_{n - 1}}^{\tau_{\varepsilon}}) |  \left| \mathbb{E}
    (X_{t_n}^{\tau_{\varepsilon}} - X_{t_{n - 1}}^{\tau_{\varepsilon}} |
    \mathcal{F}_{n - 1} \nobracket) -\mathbb{E} \left( W_{t_n}^{\tau, t_{n -
    1}, X_{t_{n - 1}}^{\tau_{\varepsilon}}} - X_{t_{n -
    1}}^{\tau_{\varepsilon}} | \mathcal{F}_{n - 1} \nobracket \right) \right|
    \right.\\
    & \leqslant & \| D \psi \|_{\infty} \quad \mathbb{E} \sum_{n =
    1}^{n_{\varepsilon}} \theta c (\varepsilon) \leqslant \| D \psi
    \|_{\infty} N^4 \theta c (\varepsilon) \lesssim c_{\psi} (\varepsilon) T =
    c_{\psi, T} (\varepsilon) .
  \end{eqnarray*}
  Similarly for the second moments, since $X_{t_{n - 1}}^{\tau_{\varepsilon}}
  \in (1 - \varepsilon) \mathbb{D}$,
  
  \begin{align}
    | B | & \leqslant \mathbb{E} \sum_{n = 1}^{n_{\varepsilon}} \sum_{| \alpha
    | = 2} | D^{\alpha} \psi (X_{t_{n - 1}}^{\tau_{\varepsilon}}) | \times
    \nonumber\\
    & \qquad \left| \mathbb{E} ((X_{t_n}^{\tau_{\varepsilon}} - X_{t_{n -
    1}}^{\tau_{\varepsilon}})^{\alpha} | \mathcal{F}_{n - 1} \nobracket)
    -\mathbb{E} \left( \left( W_{t_n}^{\tau, X_{t_{n -
    1}}^{\tau_{\varepsilon}}, t_n} - X_{t_{n - 1}}^{\tau_{\varepsilon}}
    \right)^{\alpha} | \mathcal{F}_{n - 1} \nobracket \right) \right|
    \nonumber\\
    & \lesssim \| D^2 \psi \|_{\infty} \quad \mathbb{E} \sum_{n =
    1}^{n_{\varepsilon}} \theta c (\varepsilon) \lesssim c_{\psi}
    (\varepsilon) T = c_{\psi, T} (\varepsilon) . \nonumber
  \end{align}
  
  Finally, since third order moments are at most of order $\theta^{3 / 2}$, we
  deduce
  \begin{eqnarray*}
    | C | & = & \left| \mathbb{E} \sum_{n = 1}^{n_{\varepsilon}} \mathbb{E} (R
    (X_{t_{n - 1}}^{\tau_{\varepsilon}}, X_{t_n}^{\tau_{\varepsilon}}) |
    \mathcal{F}_{n - 1} \nobracket) +\mathbb{E} \left( R \left( X_{t_{n -
    1}}^{\tau_{\varepsilon}}, W_{t_n}^{\tau, X_{t_{n -
    1}}^{\tau_{\varepsilon}}, t_n} \right) | \mathcal{F}_{n - 1} \nobracket
    \right) \right|\\
    & \lesssim & \| D^3 \psi \|_{\infty} \quad \mathbb{E} \sum_{n =
    1}^{n_{\varepsilon}} \mathbb{E} (| X_{t_n}^{\tau_{\varepsilon}} - X_{t_{n
    - 1}}^{\tau_{\varepsilon}} |^3 | \mathcal{F}_{n - 1} \nobracket)
    +\mathbb{E} \left( \left| W_{t_n}^{\tau, X_{t_{n -
    1}}^{\tau_{\varepsilon}}, t_n} - X_{t_{n - 1}}^{\tau_{\varepsilon}}
    \right|^3 | \mathcal{F}_{n - 1} \nobracket \right)\\
    & \lesssim & \| D^3 \psi \|_{\infty} \mathbb{E} \sum_{n =
    1}^{n_{\varepsilon}} \theta^{3 / 2} \lesssim \| D^3 \psi \|_{\infty} T
    \theta^{1 / 2} = c_{\psi, T} (\varepsilon) .
  \end{eqnarray*}
  This concludes the proof of the weak convergence.
\end{proof*}

\begin{proof*}{Proof of Lemma \ref{L: convergence discrete martingale
transforms}}
  We aim at estimating $\| f (X_T^{\tau_{\varepsilon}}) - M_T^f \|_p \assign
  (\mathbb{E} | f (X_T^{\tau_{\varepsilon}}) - M_T^f |^p)^{1 / p}$. Split
  first
  \begin{eqnarray*}
    f (X_T^{\tau_{\varepsilon}}) - f (X_0^{\tau_{\varepsilon}}) & = & f
    (B_T^{\tau_{\varepsilon}}) - f (B_0^{\tau_{\varepsilon}}) = \sum_{k =
    1}^{k_{\varepsilon}} [f (B_k^{\tau_{\varepsilon}}) - f (B_{k -
    1}^{\tau_{\varepsilon}})]\\
    & = & \sum_{k = 1}^{k_{\varepsilon}} \sum_{i = 1, 2} \partial_i f (B_{k -
    1}) \mathd B_k^i + \frac{1}{2} \sum_{k = 1}^{k_{\varepsilon}} \sum_{i, j =
    1, 2} \partial^2_{i j} f (B_{k - 1}) \mathd B^i_k \mathd B^j_k\\
    &  & + \sum_{k = 1}^{k_{\varepsilon}} R_3^f (B_{k - 1}, \mathd B_k),
  \end{eqnarray*}
  and on the other hand
  \begin{eqnarray*}
    M_T^f - M_0^f & \assign & \sum_{k = 1}^{k_{\varepsilon}} \sum_{i = 1, 2}
    \partial_i f (B_{k - 1}) \mathd B_k^i .
  \end{eqnarray*}
  Since $f (X_0^{\tau_{\varepsilon}}) = M_0^f = f (0)$, it follows simply
  \begin{eqnarray*}
    f (X_T^{\tau_{\varepsilon}}) - M_T^f & = & \frac{1}{2} \sum_{k =
    1}^{k_{\varepsilon}} \sum_{i, j = 1, 2} \partial^2_{i j} f (B_{k - 1})
    \mathd B^i_k \mathd B^j_k + \sum_{k = 1}^{k_{\varepsilon}} R_3^f (B_{k -
    1}, \mathd B_k) \backassign A + B
  \end{eqnarray*}
  For the second term above, we observe that for all $k$, all $x$,
  \[ | R_3^f (B_{k - 1} (x), \mathd B_k (x)) | \lesssim \| D^3 f \|_{\infty}
     \delta^{3 / 2}, \]
  and therefore
  \[ \| B \|_p \lesssim \sum_{k = 1}^{N^5} \| D^3 f \|_{\infty} \delta^{3 / 2}
     = \| D^3 f \|_{\infty} (N^5 \delta) \delta^{1 / 2} = c_T (\varepsilon) .
  \]
  Recalling that $\tau_{\varepsilon} = n_{\varepsilon} \delta =
  k_{\varepsilon} \theta$ or equivalently $k_{\varepsilon} = N
  n_{\varepsilon}$, we split the sum $A$ into blocks of size $N$, namely $A =
  \sum_{n = 1}^{n_{\varepsilon}} A_n$, with
  \begin{eqnarray*}
    A_n & = & \frac{1}{2} \sum_{l = 1}^N \partial^2_{11} f (B_{(n - 1) N + l -
    1})  (\mathd B_{(n - 1) N + l}^1)^2 + \partial^2_{22} f (B_{(n - 1) N + l
    - 1})  (\mathd B_{(n - 1) N + l}^2)^2\\
    & = & \delta \sum_{l = 1}^N \partial^2_{11} f (B_{(n - 1) N + l - 1})
    \tmmathbf{1} (\varepsilon_{(n - 1) N + l - 1} = - 1)\\
    &  & + \partial^2_{22} f (B_{(n - 1) N + l - 1}) \tmmathbf{1}
    (\varepsilon_{(n - 1) N + l - 1} = + 1)\\
    & = & \frac{\delta}{2} \sum_{l = 1}^N \partial^2_{11} f (B_{(n - 1) N + l
    - 1}) + \partial^2_{22} f (B_{(n - 1) N + l - 1})\\
    &  & + \frac{\delta}{2} \sum_{l = 1}^N [\partial^2_{22} f (B_{(n - 1) N +
    l - 1}) - \partial^2_{11} f (B_{(n - 1) N + l - 1})] \varepsilon_{(n - 1)
    N + l - 1}\\
    & = & \delta \sum_{l = 1}^N \partial^2_{22} f (B_{(n - 1) N + l - 1})
    \varepsilon_{(n - 1) N + l - 1},
  \end{eqnarray*}
  where we used $\tmmathbf{1} (\varepsilon = \pm 1) = \frac{1}{2} + \left(
  \tmmathbf{1} (\varepsilon = \pm 1) - \frac{1}{2} \right) = \frac{1}{2} \pm
  \frac{\varepsilon}{2}$ and the harmonicity of $f$. We split further
  
  \begin{align}
    A_n & = \delta \sum_{l = 1}^N [\partial^2_{22} f (B_{(n - 1) N + l - 1}) -
    \partial^2_{22} f (B_{(n - 1) N})] \varepsilon_{(n - 1) N + l - 1}
    \nonumber\\
    & \hspace{8em} + \delta \partial^2_{22} f (B_{(n - 1) N}) \sum_{l = 1}^N
    \varepsilon_{(n - 1) N + l - 1} \nonumber\\
    & \backassign B_n + C_n \nonumber
  \end{align}
  
  For $B_n$, we observe $| \partial^2_{22} f (B_{(n - 1) N + l - 1}) -
  \partial^2_{22} f (B_{(n - 1) N}) | \lesssim \| D^3 f \|_{\infty} l
  \sqrt{\delta}$, therefore $\| B_n \|_p \lesssim N^2 \delta^{3 / 2}$ and
  \[ \left\| \sum_{n = 1}^{N^4} B_n \right\|_p \lesssim N^6 \delta^{3 / 2} =
     (N^5 \delta) N (T N^{- 5})^{1 / 2} = c_{\varepsilon} (T) . \]
  Next the norm of $C_n$ is estimated
  \[ \| C_n \|_p \lesssim \delta \| D^2 f \|_{\infty}  \left( \mathbb{E}
     \left| \sum_{l = 1}^N \varepsilon_{(n - 1) N + l - 1} \right|^p
     \right)^{1 / p} . \]
  Notice that the sum above is $\sum_{l = 1}^N \varepsilon_{(n - 1) N + l - 1}
  = \mathd X_n^1 + \mathd X^2_n$, and we know from the moment estimates that
  $\| \mathd X_n^i \|_p \lesssim \theta^{1 / 2}$, $i = 1, 2$. We conclude
  \[ \left\| \sum_{n = 1}^{N^4} C_n \right\|_p \lesssim N^4 \delta \| D^2 f
     \|_{\infty} (N \delta)^{1 / 2} = (N^5 \delta) \| D^2 f \|_{\infty} N^{- 1
     / 2} \delta^{1 / 2} = c_T (\varepsilon) \]
  This concludes the proof of Lemma \ref{L: convergence discrete martingale
  transforms}.
\end{proof*}

\section{Proofs of the main results}\label{S: main results}

\begin{proof*}{Proof of Theorem \ref{T: convergence}}
  {\dueto{Convergence of $L^p$ norms of martingales}}Recall that we want to
  prove
  \[ \lim_{T \rightarrow \infty} \lim_{\varepsilon \rightarrow 0} \mathbb{E} |
     M_T^f |^p =\mathbb{E} | f (W^{\tau}_{\infty}) |^p, \]
  or equivalently in terms of norms
  \[ \lim_{T \rightarrow \infty} \lim_{\varepsilon \rightarrow 0} \| M_T^f
     \|_p = \| f (W^{\tau}_{\infty}) \|_p . \]
  Split first as the sum of three differences
  \[ \begin{array}{l}
       \| f (W^{\tau}_{\infty}) \|_p - \| M_T^f \|_p\\
       \qquad = \| f (W^{\tau}_{\infty}) \|_p - \| f (W^{\tau}_T) \|_p + \| f
       (W^{\tau}_T) \|_p - \| f (X^{\tau_{\varepsilon}}_T) \|_p + \| f
       (X^{\tau_{\varepsilon}}_T) \|_p - \| M_T^f \|_p\\
       \qquad \backassign A + B + C
     \end{array} \]
  As seen before, we have $\mathbb{P} (\tau > T) = c (T)$, therefore $| A | =
  c (T)$. For the third term, we have simply $| C | \leqslant \| f
  (W^{\tau}_T) - f (X^{\tau_{\varepsilon}}_T) \|_p = c_T (\varepsilon)$ thanks
  to Lemma \ref{L: convergence discrete martingale transforms}.
  
  For the second term, define successively on the boundary $\partial
  \mathbb{D}$, $\psi (x) \assign | f (x) |_X^p$ and $\psi_{\eta} \assign \psi
  \ast \rho_{\eta}$ a mollified version of $\psi$ tending to $\psi$ when
  $\eta$ goes to zero. We also denote $\psi$ (resp. $\psi_{\eta}$) defined on
  $\mathbb{D}$ the Poisson extension of $\psi_{| \partial \mathbb{D}}$ (resp.
  $\psi_{| \partial \mathbb{D}}$). Since $f \in L^{\infty} (\partial
  \mathbb{D}; X)$, it follows that $\psi$ and $\psi_{\eta}$ are bounded in
  $\mathbb{D}$, and $\psi_{\eta} = \psi + c (\eta)$ in $L^{\infty}
  (\mathbb{D})$. Finally, notice that if $| x | < 1$, then we have for the
  Poisson extensions $\psi (x) \neq | f (x) |_X^p$. However $\psi (x) = | f
  (x) |_X^p + c (\varepsilon)$ for those $x$'s next to the boundary, i.e. $1 -
  \varepsilon < | x | \leqslant 1$. We can now estimate the first term of $B$:
  \begin{eqnarray*}
    \| f (W_T^{\tau}) \|_p & = & \mathbb{E} (| f (W_T^{\tau}) |^p)^{1 / p}
    =\mathbb{E} (| f (W_T^{\tau}) |^p  | T > \tau)^{1 / p} + c (T)\\
    & = & \mathbb{E} (\psi (W_T^{\tau})  | T > \tau)^{1 / p} + c (T)
    =\mathbb{E} (\psi (W_T^{\tau}))^{1 / p} + c (T)\\
    & = & \mathbb{E} (\psi_{\eta} (W_T^{\tau}))^{1 / p} + c (\eta) + c (T)
  \end{eqnarray*}
  where we have used that $W_T^{\tau} \in \partial \mathbb{D}$ when $T >
  \tau$. Similarly, since $\mathbb{P} (\tau_{\varepsilon} > T) = c (T)$ and $1
  - \varepsilon \leqslant X_T^{\tau_{\varepsilon}} \leqslant 1$ for $T >
  \tau_{\varepsilon}$, we have
  \begin{eqnarray*}
    \| f (X_T^{\tau_{\varepsilon}}) \|_p & = & \mathbb{E} (| f
    (X_T^{\tau_{\varepsilon}}) |^p)^{1 / p} =\mathbb{E} (| f
    (X_T^{\tau_{\varepsilon}}) |^p | T > \tau_{\varepsilon})^{1 / p} + c (T)\\
    & = & \mathbb{E} (\psi (X_T^{\varepsilon}) | T > \tau_{\varepsilon})^{1 /
    p} + c (\varepsilon) + c (T)\\
    & = & \mathbb{E} (\psi (X_T^{\varepsilon}))^{1 / p} + c (\varepsilon) + c
    (T)\\
    & = & \mathbb{E} (\psi_{\eta} (X_T^{\varepsilon}))^{1 / p} + c (\eta) + c
    (\varepsilon) + c (T)
  \end{eqnarray*}
  It follows,
  \begin{eqnarray*}
    | B | & \leqslant & | \mathbb{E} (\psi_{\eta} (W_T^{\tau}))^{1 / p}
    -\mathbb{E} (\psi_{\eta} (X_T^{\varepsilon}))^{1 / p} | + c (\eta) + c
    (\varepsilon) + c (T)\\
    & = & c_{\eta, T} (\varepsilon) + c (\eta) + c (\varepsilon) + c (T)
  \end{eqnarray*}
  where we used Lemma \ref{L: weak convergence} for the second line. Finally
  \begin{eqnarray*}
    \| f (W^{\tau}_{\infty}) \|_p - \| M_T^f \|_p & = & A + B + C = c_{\eta,
    T} (\varepsilon) + c (\eta) + c (\varepsilon) + c (T) .
  \end{eqnarray*}
  Fix any small $\eta > 0$, choose $T > 0$ large enough so that $c (T)
  \leqslant \eta$, then $\varepsilon > 0$ small enough so that $c_{\eta, T}
  (\varepsilon) + c (\varepsilon) \leqslant \eta$. Hence
  \[ \lim_{T \rightarrow \infty} \lim_{\varepsilon \rightarrow 0} | \| f
     (W^{\tau}_{\infty}) \|_p - \| M_T^f \|_p | \leqslant c (\eta), \]
  therefore
  \[ \lim_{T \rightarrow \infty} \lim_{\varepsilon \rightarrow 0} \| M_T^f
     \|_p = \| f (W^{\tau}_{\infty}) \|_p \]
  as desired. This concludes the proof of Theorem \ref{T: convergence}.
\end{proof*}

\begin{proof*}{Proof of Theorem \ref{T: H less than S}}
  This is now a direct consequence of Theorem \ref{T: convergence}. Let $f \in
  L^p (\partial \mathbb{D})$. Its $L^p$ norm is directly related to the
  stochastic $L^p$ norm
  \[ \| f (W_{\infty}^{\tau}) \|_p \assign (\mathbb{E} | f (W_{\infty}^{\tau})
     |_X^p)^{1 / p} = \left( \int_{\partial \mathbb{D}} | f (z) |_X^p 
     \frac{\mathd z}{2 \pi} \right)^{1 / p} = \frac{1}{(2 \pi)^{1 / p}}  \| f
     \|_{L^p (\partial \mathbb{D})}, \]
  and the same relation holds for the smooth function $g \assign \mathcal{H}
  f$. From Theorem \ref{T: convergence} we know
  \[ \| f (W_{\infty}^{\tau}) \|_p = \lim_{T \rightarrow \infty}
     \lim_{\varepsilon \rightarrow 0} \| M_T^f \|_p, \qquad \| g
     (W_{\infty}^{\tau}) \|_p = \lim_{T \rightarrow \infty} \lim_{\varepsilon
     \rightarrow 0} \| M_T^g \|_p, \]
  and from Lemma \ref{L: Lp estimate for Mng} we know that for all $T > 0$,
  $\varepsilon > 0$,
  \[ \| M_T^g \|_p \leqslant \| \mathcal{S} \|_{p \rightarrow p}  \| M_T^f
     \|_p . \]
  It follows that for any $f \in L^p (\partial \mathbb{D})$,
  \[ \| g \|_{L^p (\partial \mathbb{D})} = \| \mathcal{H} f \|_{L^p (\partial
     \mathbb{D})} \leqslant \| \mathcal{S} \|_{p \rightarrow p} \| f \|_{L^p
     (\partial \mathbb{D})}, \]
  that is $\| \mathcal{H} \|_{p \rightarrow p} \leqslant \| \mathcal{S} \|_{p
  \rightarrow p}$.
\end{proof*}

\section{Averaging of the dyadic Hilbert transform}\label{S: averaging dyadic
Hilbert transform}

We prove that the average of the dyadic Hilbert transform $\mathcal{S}$ in the
sense of {\cite{Pet2000b}} is null. For that, let $\mathcal{D}^{\alpha, r} =
\{ 2^r I + \alpha : I \in \mathcal{D} \}$ the dilated and translated dyadic
grid. Here, let $1 \leqslant r < 2$ and $\alpha \in \mathbb{R}$. Denote by
$h^{\alpha, r}_I$ the corresponding $L^2$-normalized Haar functions and by
$\mathcal{S}^{\alpha, r}$ the dyadic Hilbert transform associated to
$\mathcal{D}^{\alpha, r}$. Since in the usual sense,
\[ \mathcal{S}^{\alpha, r} : f (x) \mapsto \sum_{I \in \mathcal{D}^{\alpha,
   r}} [- \langle f, h^{\alpha, r}_{I_-} \rangle h^{\alpha, r}_{I_+} (x) +
   \langle f, h^{\alpha, r}_{I_+} \rangle h^{\alpha, r}_{I_-} (x)], \]
the kernel of $\mathcal{S}^{\alpha, r}$ is
\[ K^{\alpha, r} (t, x) = \sum_{I \in \mathcal{D}^{\alpha, r}} [- h^{\alpha,
   r}_{I_-} (t) h^{\alpha, r}_{I_+} (x) + h^{\alpha, r}_{I_+} (t) h^{\alpha,
   r}_{I_-} (x)] . \]
Here is the illustration of the sign distribution of a single term for a given
interval $I$.

\begin{center}
\includegraphics[width=5.86323051948052cm,height=4.85888921684376cm]{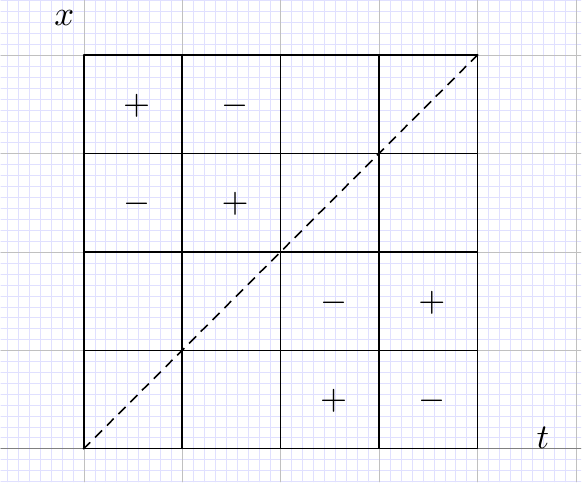}
\end{center}

Following the strategy of the second author in {\cite{Pet2000b}}, the average
of the kernel by dilation and translation we consider is:
\[ \mathbb{E}_r \mathbb{E}_{\alpha} K^{\alpha, r} (t, x) = \frac{1}{\log 2}
   \int^2_1 \lim_{R \rightarrow \infty} \frac{1}{2 R} \int^R_{- R} K^{\alpha,
   r} (t, x) \mathd \alpha \frac{\mathd r}{r} . \]
\begin{lemma}
  We have $\mathbb{E}_r \mathbb{E}_{\alpha} K^{\alpha, r} (t, x) = 0.$
\end{lemma}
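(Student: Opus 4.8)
The plan is to reduce the double average to a single integral over all scales, evaluate the contribution of one scale explicitly, and exhibit a cancellation. Throughout I fix $t \neq x$ and set $d \assign x - t$. By construction the kernel is antisymmetric, $K^{\alpha, r} (x, t) = - K^{\alpha, r} (t, x)$, a property preserved under averaging, so it suffices to treat $d > 0$; if nonzero, the average would be an odd function of $d$.

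First I would carry out the translation average at a fixed scale. For a fixed generation the intervals of $\mathcal{D}^{\alpha, r}$ of a common length $L$ form a lattice of period $L$, and averaging $\frac{1}{2 R} \int_{- R}^R \cdots \mathd \alpha$ over this lattice as $R \rightarrow \infty$ unfolds into an integral over the position $a$ of a single interval $J \assign [a, a + L)$, normalized by $1 / L$. Thus $\mathbb{E}_{\alpha} K^{\alpha, r}$ becomes $\sum_k G_{L_k}$, where
\[ G_L (t, x) \assign \frac{1}{L} \int_{\mathbb{R}} \big[ - h_{J_-} (t) h_{J_+} (x) + h_{J_+} (t) h_{J_-} (x) \big] \mathd a . \]
For $d > 0$ only the first term survives (the second needs $x$ in the left child and $t$ in the right child, forcing $x < t$). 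Writing $h_{J_-}$ and $h_{J_+}$ as normalized differences of indicators of the four grandchildren of $J$, the integral becomes a signed sum of overlaps of four $a$-intervals of length $L / 4$. A direct count yields $G_L (t, x) = - \frac{2}{L} \Phi (d / L)$, where $\Phi \assign 2 \lambda (\cdot - \tfrac{1}{2}) - \lambda (\cdot - \tfrac{1}{4}) - \lambda (\cdot - \tfrac{3}{4})$ is built from the triangle $\lambda (s) \assign \max (0, \tfrac{1}{4} - | s |)$; the coefficient $+ 2$ collects the two self-overlaps and the coefficients $- 1$ the two cross-overlaps.

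Next I would combine the sum over generations with the dilation average. Because the dilation is taken against the scale invariant measure, the sum $\sum_k$ together with $\frac{1}{\log 2} \int_1^2 \cdots$ telescopes into a single scale integral,
\[ \mathbb{E}_r \mathbb{E}_{\alpha} K^{\alpha, r} (t, x) = \frac{1}{\log 2} \int_0^{\infty} G_L (t, x) \frac{\mathd L}{L} . \]
Inserting $G_L = - \frac{2}{L} \Phi (d / L)$ and substituting $u = d / L$ turns this into $- \frac{2}{d \log 2} \int_0^{\infty} \Phi (u) \mathd u$. Since the three translates of $\lambda$ have equal integral, the coefficients $2 - 1 - 1 = 0$ force $\int_0^{\infty} \Phi = 0$, hence the average vanishes. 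This is the crux: after position- and scale-averaging, the self-overlaps of the butterfly exactly balance its cross-overlaps.

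The main obstacle is the first step: one must justify the unfolding of the translation average (interchanging the limit in $R$, the lattice sum and the generation sum) and the absolute convergence of $\sum_k G_{L_k} (d)$. The latter holds because $G_L (d) = O (d / L^2)$ as $L \rightarrow \infty$ (using $\Phi (\xi) = - \xi$ near $0$), while $G_L (d) = 0$ as soon as $L \leqslant d$ (then $d / L$ leaves the support of $\Phi$). I would also stress that the cancellation is genuinely cross-scale: for a single fixed grid the sum $\sum_k G_{2^{- k}} (d)$ does \emph{not} vanish, so it is averaging against the scale invariant measure, and not merely antisymmetry, that produces the result.
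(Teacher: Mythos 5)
Your proof is correct, and it takes a genuinely different route from the paper's. The paper avoids all computation: it splits $K^{\alpha, r} = K_-^{\alpha, r} + K_+^{\alpha, r}$ into the parts supported strictly below and strictly above the diagonal, notes that the average of the symmetric combination $K_-^{\alpha, r} - K_+^{\alpha, r}$ is translation invariant, dilation invariant and symmetric in $(t, x)$, hence must represent the zero operator (an invariant kernel is a homogeneous function of $t - x$, and the only such kernel of a bounded operator is the odd one $c / (t - x)$), and then uses the disjointness of the supports of the two partial averages to conclude that each vanishes separately, hence so does their sum $K^{\alpha, r}$. You instead evaluate the average outright: unfolding the translation average at scale $L$ into the overlap integral over the position $a$ does give $G_L (t, x) = - \tfrac{2}{L} \Phi (d / L)$ — I checked the four grandchild overlaps, the signs, and the offsets $L / 4$, $L / 2$, $3 L / 4$ producing your three triangles — then the generation sum telescopes against $\tfrac{1}{\log 2} \int_1^2 \tfrac{\mathd r}{r}$ into $\tfrac{1}{\log 2} \int_0^{\infty} G_L \, \tfrac{\mathd L}{L}$, and $\int_0^{\infty} \Phi = 0$ by the coefficient identity $2 - 1 - 1 = 0$. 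Both proofs rest on exactly the same two invariances but deploy them differently: the paper's argument is shorter and conceptual, at the price of leaning on an unstated classification of invariant kernels of bounded operators; yours is longer but self-contained, makes the cancellation mechanism explicit (self-overlaps versus cross-overlaps of the butterfly), settles the interchange and convergence issues that the paper's two-line conclusion quietly assumes, and your closing observation that the fixed-grid sum $\sum_k G_{2^{- k}} (d)$ does \emph{not} vanish is a genuinely instructive addition, showing the dilation average is indispensable — precisely the point at which the paper's symmetry argument also needs it. One small caveat: your telescoping presupposes the scale-invariant normalization (dilation $r I$ averaged against $\mathd r / r$, as in the paper's displayed average and in the original averaging argument of \cite{Pet2000b}); read literally, the paper's definition $\mathcal{D}^{\alpha, r} = \{ 2^r I + \alpha \}$ combined with the measure $\mathd r / r$ would attach a log-periodic weight to $\mathd L / L$ and spoil the clean substitution $u = d / L$, but that definition is evidently a slip, and the paper's own invariance argument requires the same normalization you adopt.
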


\begin{proof}
  While these averages can be calculated explicitly, one can also argue as
  follows. The kernel $K^{\alpha, r} (t, x)$ can be split into a sum of
  partial kernels
  \[ K_-^{\alpha, r} (t, x) = \sum_{I \in \mathcal{D}^{\alpha, r}} h^{\alpha,
     r}_{I_+} (t) h^{\alpha, r}_{I_-} (x) \]
  and
  \[ K_+^{\alpha, r} (t, x) = \sum_{I \in \mathcal{D}^{\alpha, r}} -
     h^{\alpha, r}_{I_-} (t) h^{\alpha, r}_{I_+} (x) . \]
  $K_-^{\alpha, r} (t, x)$ is supported under the diagonal, $t > x$ while
  $K_+^{\alpha, r} (t, x)$ is supported over the diagonal, $t < x$. Clearly
  the averages produce no mass on the diagonal. Observe now that the average
  of the even kernel $K_-^{\alpha, r} (t, x) - K_+^{\alpha, r} (t, x)$ is
  dilation invariant, translation invariant and symmetric. It therefore
  represents the zero operator. We may then conclude that the averages of the
  partial kernels themselves are 0. The claim follows.
\end{proof}

\

\

\

\


\begin{thebibliography}{1}
  \bibitem[1]{BanWan1996}Rodrigo Ba{\~n}uelos  and  Gang Wang.
  {\newblock}Orthogonal martingales under differential subordination and
  applications to Riesz transforms. {\newblock}\tmtextit{Illinois J. Math.},
  40(4):678--691, 1996.{\newblock}
  
  \bibitem[2]{DomPet2022a}Komla Domelevo  and  Stefanie Petermichl.
  {\newblock}The dyadic and the continuous hilbert transforms with values in
  banach spaces. {\newblock}\tmtextit{ArXiv preprint}, 12 2022.
  {\newblock}\url{https://doi.org/10.48550/arXiv.2212.00090}.{\newblock}
  
  \bibitem[3]{KloPla1992}Peter~E.~Kloeden  and  Eckhard Platen.
  {\newblock}\tmtextit{Numerical solution of stochastic differential
  equations},  volume~23  of \tmtextit{Applications of Mathematics (New
  York)}. {\newblock}Springer-Verlag, Berlin, 1992.{\newblock}
  
  \bibitem[4]{Law2010a}Gregory~F.~Lawler. {\newblock}\tmtextit{Random walk and
  the heat equation},  volume~55  of \tmtextit{Student Mathematical Library}.
  {\newblock}American Mathematical Society, Providence, RI, 2010.{\newblock}
  
  \bibitem[5]{Pet2000b}Stefanie Petermichl. {\newblock}Dyadic shifts and a
  logarithmic estimate for Hankel operators with matrix symbol.
  {\newblock}\tmtextit{C. R. Acad. Sci. Paris S{\'e}r. I Math.},
  330(6):455--460, 2000.{\newblock}
  
  \bibitem[6]{Tal1986n}Denis Talay. {\newblock}Discr{\'e}tisation d'une
  {\'e}quation diff{\'e}rentielle stochastique et calcul approch{\'e}
  d'esp{\'e}rances de fonctionnelles de la solution.
  {\newblock}\tmtextit{RAIRO Mod{\'e}l. Math. Anal. Num{\'e}r.},
  20(1):141--179, 1986.{\newblock}
\end{thebibliography}
\end{document}